\numberwithin{equation}{section}
\newtheorem{theorem}{Theorem}[section]
\newtheorem{lemma}[theorem]{Lemma}
\newtheorem{statement}[theorem]{Statement}
\newtheorem{definition}[theorem]{Definition}
\theoremstyle{remark}
\newtheorem*{remark}{Remark}
\newcommand{\R}{\mathbb{R}}
\newcommand{\C}{\mathbb{C}}
\newcommand{\Cond}{{\bf (C0)}}
\newcommand{\UI}{{\bf (UI)}}
\newcommand{\Lind}{{\bf (L)}}
\newcommand{\X}{{\bf X}}
\newcommand{\A}{{\bf A}}
\newcommand{\M}{{\bf M}}
\newcommand{\B}{{\bf B}}
\newcommand{\Q}{{\bf Q}}
\newcommand{\EE}{{\bf E}}
\newcommand{\F}{{\bf F}}
\newcommand{\G}{{\bf G}}
\newcommand{\HH}{{\bf H}}
\newcommand{\OO}{{\bf O}}
\newcommand{\OP}{{\bf P}}
\newcommand{\I}{{\bf I}}
\newcommand{\W}{{\bf W}}
\DeclareMathOperator{\Tr}{Tr}
\DeclareMathOperator{\dist}{dist}
\DeclareMathOperator{\E}{\mathbb{E}}
\DeclareMathOperator{\Pb}{\mathbb{P}}
\DeclareMathOperator{\supp}{supp}
\DeclareMathOperator{\incomp}{\mathcal {IC}}
\begin{document}

\vspace{1in}

\title[On minimal singular values]{\bf On minimal singular values of random matrices with correlated entries}


\author[F. G{\"o}tze]{F. G{\"o}tze}
\address{F. G{\"o}tze\\
 Faculty of Mathematics\\
 Bielefeld University \\
 Bielefeld, Germany
}
\email{goetze@math.uni-bielefeld.de}

\author[A. Naumov]{A. Naumov}
\address{A. Naumov\\
 Faculty of Mathematics\\
 Bielefeld University \\
 Bielefeld, Germany \\
 and Faculty of Computational Mathematics and Cybernetics\\
 Moscow State University \\
 Moscow, Russia
 }
\email{naumovne@gmail.com, anaumov@math.uni-bielefeld.de}

\author[A. Tikhomirov]{A. Tikhomirov}
\address{A. Tikhomirov\\
 Department of Mathematics\\
 Komi Research Center of Ural Branch of RAS \\
 Syktyvkar, Russia
 }
\email{tichomir@math.uni-bielefeld.de}
\thanks{All authors are supported by CRC 701 ``Spectral Structures and Topological
Methods in Mathematics'', Bielefeld. A.~Tikhomirov are partially supported by RFBR, grant  N 11-01-00310-a and  grant N 11-01-12104-ofi-m-2011.
A.~Naumov has been supported by the German Research Foundation (DFG) through the International Research Training Group IRTG 1132 and in part by the Simons Foundation.}

\keywords{Random matrices, circular law, elliptic law, non identically distributed entries, Stieltjes transform}

\date{\today}

\begin{abstract}
Let $\mathbf X$ be a random matrix whose pairs of entries $X_{jk}$ and $X_{kj}$  are correlated and
vectors $ (X_{jk},X_{kj})$, for $1\le j<k\le n$,  are mutually independent. Assume that the diagonal entries are independent from off-diagonal entries as well. We assume that $\E X_{jk}=0$, $\E X_{jk}^2=1$, for any $j,k=1,\ldots,n$  and  $\E X_{jk}X_{kj}=\rho$ for $1\le j<k\le n$. Let $\mathbf M_n$ be a non-random $n\times n$ matrix with $\|\mathbf M_n\|\le Kn^Q$, for some positive constants $K>0$ and $Q\ge 0$. Let $s_n(\mathbf X+\mathbf M_n)$ denote the least singular value of the matrix $\mathbf X+\mathbf M_n$. It is shown that there exist positive constants $A$ and $B$ depending on $K,Q,\rho$ only such that
\begin{equation}\notag
 \Pb(s_n(\mathbf X+\mathbf M_n)\le n^{-A})\le n^{-B}.
\end{equation}
As an application of this result we prove the elliptic law
for this class of matrices with non identically distributed
correlated entries.
\end{abstract}

\maketitle



\section{Introduction}
Let $\mathbf M_n$ be an $n\times n$ matrix such that $\|\mathbf M_n\|\le Kn^Q$ with some positive constant $K>0$ and a
non-negative constant  $Q\ge 0$. Consider an $n\times n$ matrix $\mathbf W=\mathbf X+\mathbf M_n$, where $\mathbf X$ denotes a random matrix with real entries
$X_{j,k}$, $1\le j,k\le n$, satisfying the following conditions \Cond:\\
a) random vectors $(X_{jk}, X_{kj})$ are mutually independent for $1 \le j < k \le n$;\\
b) for any $j, k = 1, ... , n$
$$
\E X_{j k} = 0 \text{ and } \E X_{j k}^2 = 1;
$$
c) for any $1 \le j < k \le n$
$$
\E ( X_{j k} X_{k j} ) = \rho, |\rho| \le 1;
$$
In the case $\rho=1$ $X$ is a.s. a symmetric matrix, if $\rho=0$ and all random variables are Gaussian it is from the Ginibre ensemble.

We say that the entries $X_{j,k}$, $1\le j,k\le n$, of the matrix $\X$ satisfy condition $\UI$ if the squares of $X_{jk}$'s are uniformly integrable , i.e.
\begin{equation}\label{unif0}
\max_{j,k}\E|X_{jk}|^2\mathbb I{\{|X_{jk}|>M\}} \rightarrow 0 \quad \text{as}\quad M \rightarrow \infty.
\end{equation}
Let us denote the least singular value of the perturbed matrix $\mathbf W$ by $s_n:=s_n(\mathbf W)$.
The main result of this note is
\begin{theorem} \label{th:main}
Assume that $X_{jk}$, $1\le j,k\le n$, satisfy the conditions \Cond and \UI.  Let $\X=\{X_{jk}\}$ denote a $n\times n$ random matrix with entries $X_{jk}$ and let  $\mathbf M_n$ denote a non-random matrix with $\|\mathbf M_n\|\le Kn^Q=:K_n$ for some $K>0$  and $Q\ge0$. Then there exist constants $ C, A,B >0$ depending on  $K,Q$ such that
\begin{align}
\Pb (s_n\le n^{-B } )\le Cn^{-A},
\end{align}
\end{theorem}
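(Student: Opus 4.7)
The plan is to follow the Rudelson--Vershynin invertibility-via-distance framework, adapted to the correlated elliptic setting. Writing $s_n(\W)=\inf_{x\in S^{n-1}}\|\W x\|$, we decompose the unit sphere into the compressible set $\mathrm{Comp}(\delta,\nu)$ of unit vectors at $\ell_2$-distance $\le\nu$ from some $\delta n$-sparse vector, and its incompressible complement $\mathrm{Incomp}(\delta,\nu)$, for small constants $\delta,\nu>0$ depending on $K,Q,\rho$. The two pieces are treated separately.

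For the compressible part, a standard net argument applies: for any fixed unit $x$ one has $\E\|\W x\|^2=n+\|\M_n x\|^2\ge n$, and the fluctuations of $\|\W x\|^2$ around its mean depend only on $\X$ (not on $\M_n$) and are controlled by a Hanson--Wright--type inequality. Combining this with an $\varepsilon$-net of cardinality $(C/\delta)^{\delta n}$ yields $\inf_{x\in\mathrm{Comp}}\|\W x\|\ge c\sqrt n$ on an event of probability at least $1-e^{-cn}$, uniformly in $\|\M_n\|$.

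For the incompressible part, we invoke the row-distance reduction
\[
\inf_{x\in\mathrm{Incomp}}\|\W x\|\ \ge\ \frac{\nu}{\sqrt n}\,\min_{1\le k\le n}\dist(R_k,H_k),
\]
with $R_k$ the $k$-th row of $\W$ and $H_k=\Sp\{R_j:j\ne k\}$, so it suffices to prove $\Pb(\dist(R_k,H_k)\le n^{-B'})\le n^{-A'}/(\delta n)$ uniformly in $k$. We would condition on $\mathcal F_k=\sigma(X_{ij}:i\ne k)$, which fixes $H_k$ together with a unit normal $v=v(\mathcal F_k)$ (the event that $H_k$ is rank-deficient being handled by a separate argument), as well as the column entries $X_{jk}$, $j\ne k$. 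Exploiting the correlated structure through the conditional regression $X_{kj}=\rho X_{jk}+Y_{kj}$ with $Y_{kj}$ of conditional variance $1-\rho^2$ gives
\[
\dist(R_k,H_k)=|\langle R_k,v\rangle|=\Big|\sum_{j\ne k}Y_{kj}v_j+X_{kk}v_k+Z_k\Big|,
\]
where $Z_k$ is $\mathcal F_k$-measurable. A Rogozin--Esseen small-ball estimate, applied conditionally, then yields the required polynomial tail, provided $v$ is incompressible with high probability; the latter is obtained by a bootstrap showing that a compressible $v$ would annihilate $n-1$ correlated random rows of $\W$, an event of exponentially small probability by the compressible estimate applied to $\W^\top$.

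The main obstacle is the boundary case $|\rho|=1$, corresponding to a (anti)symmetric $\X$. Then $Y_{kj}\equiv 0$, so the conditioning on $\mathcal F_k$ leaves only $X_{kk}$ free and the single-row argument degenerates because one has no control over the size of the single surviving coordinate $v_k$. To handle it one must replace the one-row removal by a Vershynin-type double row/column removal: condition on all entries outside rows $j$ and $k$ for a suitable pair $(j,k)$, so that a full two-dimensional random block remains and the Littlewood--Offord step can be carried out with respect to a two-dimensional parameter. The essential point is again incompressibility of the relevant normal direction together with a non-degenerate conditional distribution of the surviving entries; combining this with the single-row argument used for $|\rho|<1$ unifies both cases and produces the claimed polynomial bound $\Pb(s_n\le n^{-B})\le C n^{-A}$ stated in Theorem~\ref{th:main}.
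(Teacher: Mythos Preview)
Your overall architecture (compressible/incompressible split, net argument, distance-to-hyperplane reduction) is the right one and matches the paper. But the core of your incompressible argument has a genuine gap that is not a technicality.

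\medskip
\textbf{The regression step fails under \Cond\ alone.} You write $X_{kj}=\rho X_{jk}+Y_{kj}$ and claim that, after conditioning on $\mathcal F_k=\sigma(X_{ij}:i\ne k)$ (which in particular fixes all $X_{jk}$, $j\ne k$), the residuals $Y_{kj}$ have conditional variance $1-\rho^2$ and hence enough anti-concentration for a Rogozin--Esseen bound. Condition \Cond\ only says $\E X_{jk}X_{kj}=\rho$; it gives \emph{uncorrelatedness} of $Y_{kj}$ and $X_{jk}$, not independence, and nothing about the conditional law. Concretely, take $X_{jk}$ standard Gaussian and set
\[
X_{kj}=\rho X_{jk}+\sqrt{1-\rho^2}\,\frac{X_{jk}^2-1}{\sqrt2}.
\]
Then $\E X_{kj}=0$, $\E X_{kj}^2=1$, $\E X_{jk}X_{kj}=\rho$, so \Cond\ and \UI\ hold with any $|\rho|<1$; yet $X_{kj}$ is a deterministic function of $X_{jk}$. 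After your conditioning, the entire row $R_k$ is fixed except for $X_{kk}$, and the distance bound degenerates exactly as in the $|\rho|=1$ case you flagged. So the obstruction you identified at $|\rho|=1$ is in fact present for all $\rho$ under the hypotheses of the theorem, and your single-row argument cannot be the main engine.

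\medskip
\textbf{What the paper does instead.} The paper never conditions on the partner entry of a correlated pair. For the fixed-vector small-ball bound (its Lemma for compressible vectors) it uses a $2\times2$ block decomposition
\[
\X=\begin{pmatrix}\A&\B\\ \C&\D\end{pmatrix},
\]
and observes that the off-diagonal block $\B$ contains entries $X_{jk}$ with $j\le n_0<k$, hence at most one member of each correlated pair; all entries of $\B$ are therefore mutually independent and independent of $\A$. This replaces your Hanson--Wright step (which would also be problematic here: you only have \UI, not sub-Gaussian tails, and the entries are not jointly independent). For the incompressible part the paper does use the column-distance reduction, but then invokes Vershynin's identity
\[
\dist(\mathbf A_1,\mathcal H_1)\ \ge\ \frac{|(\B^{-T}\mathbf v,\mathbf u)-a_{11}|}{\sqrt{1+\|\B^{-T}\mathbf v\|_2^2}},
\]
where $\mathbf u,\mathbf v$ are the first column and first row with the corner removed. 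The point is that the bilinear form $(\B^{-T}\mathbf v,\mathbf u)$ involves the correlated pair $(\mathbf u,\mathbf v)$ \emph{jointly}; a decoupling/symmetrization lemma reduces it to a linear form in $(\OP_{\mathcal J}\mathbf u,\OP_{\mathcal J}\mathbf v)$ with coefficients coming from an incompressible vector, and the concentration-function bound is then applied to sums of the type $\sum_i(a_iX_{1i}+b_iX_{i1})$, where the correlation enters only through the variance lower bound $(1-\rho^2)\sum a_i^2$. No conditional regression is needed, and the argument works uniformly in the joint law subject to \Cond\ and \UI.

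\medskip
In short: your plan would go through if $(X_{jk},X_{kj})$ were jointly Gaussian (or if conditional anti-concentration were assumed), but under \Cond\ and \UI\ alone the conditioning kills the randomness you need. The fix is exactly the block/quadratic-form route the paper takes, which keeps both members of each correlated pair in play simultaneously.
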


\begin{remark}
 Under the conditions of Theorem \ref{th:main} we have, for $\gamma \ge\max\{Q,1\}+\frac12$
\begin{equation}
\Pb (\|\X+\M_n\|>n^{\gamma}\}
 \le \frac {\E\|\mathbf X\|_2^2+\|\mathbf M_n\|^2}{n^{2\gamma}}\le \frac{C}n.
\end{equation}
\end{remark}

A similar result  to Theorem \ref{th:main} was obtained by Tao and Vu in~\cite{TaoVu2010} and by G{\"o}tze and Tikhomirov in~\cite{GotTikh2010} for non-symmetric random matrices (that is $\rho=0$ and $X_{jk}$ and $X_{kj}$ are independent).
Under the condition that $X_{jk}$, for $j,k=1,\ldots, n$ are i.i.d. random variables with subgaussian distribution, Rudelson and Vershynin in~\cite{RudVesh2008} obtained an optimal
 bound for $s_n(\bold X)$ (without non-random shift). In the case of symmetric matrices
($\rho=1$) with i.i.d. entries which have a subgaussian distribution Vershynin proved
\begin{equation}
 \Pb (s_n(\X-z \I)\le \varepsilon n^{-1/2} \text{ and }\|\X\|\le K\sqrt n\}
 \le C\varepsilon^{\frac19}+2\text{\rm e}^{-n^c}.
\end{equation}
The result of Theorem~\ref{th:main} in the case of symmetric matrices
($\M_n$ symmetric as well) was proved by H. Nguyen in~\cite{Ngu2010} assuming a so-called anti-concentration condition. For the i.i.d. case. H. Nguyen used in his proof techniques which are very different from those used by Vershynin.
Recently Nguyen and O'Rourke in~\cite{ORourkeNguyen2012} have proved the result of Theorem~\ref{th:main} for i.i.d. r.v.'s, assuming a finite second order moment. It seems though that their proof is rather involved. In the present note we consider the non-i.i.d. case.
Our proof is short and based on the approach by Rudelson and Vershynin (see \cite{RudVesh2008}) which
divides the unit sphere into two classes of compressible and incompressible vectors.

Throughout this paper we assume that all random variables are defined on a common probability
space $(\Omega, \mathcal{F}, \Pb)$. By $C$ (with an index or without it) we shall denote generic absolute constants, whereas $C(\,\cdot\,,\,\cdot\,)$ will denote positive constants depending on various arguments. For any matrix $\mathbf A$ we shall denote by $\|\mathbf A\|_2$ the Frobenius norm of the matrix $\mathbf A$ ($\|\mathbf A\|_2^2=\Tr\mathbf A\mathbf A^*$) and by $\|\mathbf A\|$ we shall denote the operator norm of the matrix $\mathbf A$ ($\|\mathbf A\| =\sup_{\mathbf x: \|\mathbf x\|=1}\|\mathbf A\mathbf x\|$).

\section{Proof of the main result}
 The proof is similar to the proof of Theorem 4.1 in \cite{GotTikh2010}.  We use ideas of
 Rudelson and Veshynin
\cite{RudVesh2008} to classify with high probability the vectors $\mathbf x$ in the
$(n-1)$-dimensional unit sphere $\mathcal S^{(n-1)}$ such that
$\|\W x\|_2$ is extremely small into two classes, called compressible
and incompressible vectors.
Note that
\begin{equation}
 s_n=\inf_{\mathbf x\in\mathcal S^{(n-1)}}\|\W \mathbf x\|_2.
\end{equation}
First we note that without loss of generality we may assume that the matrix $\mathbf W$ and
all its principal minors are invertible.
Otherwise we may consider the matrix $\mathbf W+\exp\{-n\}r\mathbf I$ where $r$ is a random
variable  which is uniformly
distributed  on the unit interval and independent
of the matrix $\mathbf W$. For such a matrix we have
\begin{equation}
 \Pr\{\det(\mathbf W+\exp\{-n\}r\mathbf I)=0\}
=\E\Pr\{\prod_{j=1}^n(\lambda_j+r\exp\{-n\})=0\Big|\mathbf W\}=0.
\end{equation}
We denote here by $\lambda_1,\ldots,\lambda_n$ the eigenvalues of the matrix $\mathbf W$.
Moreover, the last relation holds for any principal minor of the matrix $\mathbf W$.
We  also have the inequality
\begin{equation}
 s_n(\mathbf W)\ge s_n(\mathbf W+\exp\{-n\}r\mathbf I)-\exp\{-n\}.
\end{equation}
Note that the matrix $\mathbf X+\exp\{-n\}r\mathbf I$ satisfies the conditions~\Cond.

We start now from the following lemma.
\begin{lemma}\label{local}
Let $\mathbf x^T=(x_1,\ldots,x_n)$ be a fixed unit vector and $\mathbf X$ be a matrix as
in Theorem \ref{th:main}.
Then there exist some positive absolute constants $c_0$ and $\tau_0$ such that for any
$0<\tau\le\tau_0$ and any vector $\mathbf u^T=(u_1,\ldots,u_n)$
\begin{equation}
 \Pb (||\W \mathbf x-\mathbf u ||_2/||\mathbf x||_2 \le\tau\sqrt n) \le \exp\{-c_0n\}.
\end{equation}
\end{lemma}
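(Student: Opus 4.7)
The plan is to apply Markov's exponential inequality combined with a characteristic-function factorisation that exploits the mutual independence of the pairs $(X_{jk},X_{kj})$ for $j<k$ and of the diagonals $X_{jj}$. Set $\mathbf v=\M_n\mathbf x-\mathbf u$, so that $\W\mathbf x-\mathbf u=\X\mathbf x+\mathbf v$. Markov's inequality gives
$$\Pb\bigl(\|\X\mathbf x+\mathbf v\|_2^2\le\tau^2 n\bigr)\le e^n\,\E\exp\bigl(-\|\X\mathbf x+\mathbf v\|_2^2/\tau^2\bigr),$$
and the identity $e^{-y^2/\tau^2}=\pi^{-1/2}\int_\R e^{-s^2}\cos(2sy/\tau)\,ds$ applied coordinate by coordinate rewrites this upper bound as
$$e^n\pi^{-n/2}\int_{\R^n}e^{-|\mathbf s|^2}\,e^{-2i\mathbf s^T\mathbf v/\tau}\,\phi_{\X\mathbf x}(-2\mathbf s/\tau)\,d\mathbf s,\qquad \phi_{\X\mathbf x}(\mathbf t):=\E e^{i\mathbf t^T\X\mathbf x}.$$

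The next step is the factorisation
$$\phi_{\X\mathbf x}(\mathbf t)=\prod_{1\le j<k\le n}\phi_{jk}(t_jx_k,t_kx_j)\cdot\prod_{j=1}^n\phi_{jj}(t_jx_j),\qquad \phi_{jk}(a,b):=\E e^{i(aX_{jk}+bX_{kj})},$$
which follows from the pair-independence hypothesis. Condition \UI{} together with $\E X_{jk}=0$ and $\E X_{jk}^2=1$ yields, via a Taylor expansion of $1-\Re\phi_{jk}$ at the origin, constants $c_1,\delta_0>0$ uniform in $j,k,n$ such that $|\phi_{jk}(a,b)|\le\exp\bigl(-c_1(a^2+2\rho ab+b^2)\bigr)$ whenever $|a|,|b|\le\delta_0$, with an analogous estimate for $\phi_{jj}$. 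Collecting these exponents and using $\|\mathbf x\|_2=1$ yields the identity
$$\sum_{j<k}\bigl(t_j^2x_k^2+2\rho t_jt_kx_jx_k+t_k^2x_j^2\bigr)+\sum_j t_j^2x_j^2=|\mathbf t|^2+\rho\bigl((\mathbf t\cdot\mathbf x)^2-\sum_j t_j^2x_j^2\bigr)\ge(1-|\rho|)|\mathbf t|^2.$$

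To finish I split the integral at the box $B_\tau:=\{\|\mathbf s\|_\infty\le\delta_0\tau/2\}$, on which — since $|x_k|\le1$ — the quadratic c.f.\ bound applies coordinate-wise. On $B_\tau$ the integrand is dominated by $\exp\bigl(-(1+4c_1(1-|\rho|)/\tau^2)|\mathbf s|^2\bigr)$, and extending the integration to $\R^n$ gives a Gaussian integral of order $(\pi\tau^2/(4c_1(1-|\rho|)))^{n/2}$; after multiplying by $e^n\pi^{-n/2}$ this is at most $(C\tau^2/(1-|\rho|))^{n/2}$, exponentially small in $n$ once $\tau\le\tau_0$. The main obstacle I anticipate is the contribution from $\R^n\setminus B_\tau$, where the local quadratic bound is unavailable and where, under \UI{} alone, one cannot expect a uniform global inequality $|\phi_{jk}|\le 1-c_2$ (the entries may be lattice-valued). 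I intend to control this region by a case analysis based on which coordinates $s_j$ leave the local regime, combined with the Gaussian weight $e^{-|\mathbf s|^2}$ and — if needed — a truncation of the entries at a scale $M$ depending on the \UI{} modulus to obtain on intermediate annuli a bound of the form $|\phi_{jk}|\le 1-c_2$; the borderline case $|\rho|=1$, in which the quadratic form degenerates along $\mathbf t=\pm\mathbf x$, reduces to the symmetric/antisymmetric setting already treated in \cite{Ngu2010}.
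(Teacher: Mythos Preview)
Your approach diverges from the paper's at a structural level, and the gap you flag yourself is fatal rather than a detail to be filled in. With the Markov parameter $\lambda=1/\tau^2$, the prefactor is $e^n$, while the box $B_\tau=\{\|\mathbf s\|_\infty\le\delta_0\tau/2\}$ has Gaussian mass $\pi^{-n/2}\bigl(\int_{-\delta_0\tau/2}^{\delta_0\tau/2}e^{-s^2}ds\bigr)^n=O((\delta_0\tau)^n)$; hence $\R^n\setminus B_\tau$ carries essentially all of the Gaussian weight, and the trivial bound $|\phi_{\X\mathbf x}|\le 1$ there yields a contribution $\approx e^n$, not $e^{-cn}$. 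Your proposed remedy cannot work in general: for lattice entries (e.g.\ $X_{jk}=\pm1$) one has $|\phi_{jk}(a,b)|=1$ on an unbounded grid, so no truncation produces a uniform bound $|\phi_{jk}|\le 1-c_2$ on annuli. The standard fix --- choose a \emph{fixed small} $t$ in Markov, Gaussian-randomise each coordinate, and condition on $\{|\xi_j|\le C_1\}$ per coordinate via $\alpha x+1-\alpha\le\max\{x^\beta,(\beta/\alpha)^{\beta/(1-\beta)}\}$ --- requires the Laplace transform to factorise over $j$. In your setting it factorises only over pairs $\{j,k\}$, since $(\X\mathbf x)_j$ and $(\X\mathbf x)_k$ share the correlated block $(X_{jk},X_{kj})$; the $s_j$'s remain coupled and the per-coordinate conditioning is unavailable.

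The paper's proof supplies exactly the missing decoupling. Split $[n]=\{1,\dots,n_0\}\cup\{n_0+1,\dots,n\}$ with $n_0=[n/2]$, write $\X$ in $2\times2$ block form with off-diagonal blocks $\mathbf B,\mathbf C$, and observe that the entries of $\mathbf B$ are \emph{mutually independent} (every correlated partner $X_{kj}$ of an entry $X_{jk}\in\mathbf B$ sits in $\mathbf C$) and independent of $\mathbf A$. Taking the larger half of $\mathbf x$, say $\|\mathbf x_1\|\ge\|\mathbf x\|/\sqrt2$, one gets $\|\W\mathbf x-\mathbf u\|_2^2\ge\|\mathbf B\mathbf x_1+\mathbf y^{(0)}\|_2^2=\sum_{j=1}^{n_0}(\zeta_j+\widetilde y_j^{(0)})^2$ with $\zeta_j=\sum_{k>n_0}X_{jk}\widetilde x_{1k}$ independent across $j$ and independent of $\mathbf y^{(0)}$. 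Now Chebyshev with a \emph{constant} $t$ (chosen $\le 1/(MC_1)$ so every c.f.\ argument stays in the local regime after conditioning on $|\xi_j|\le C_1$) factorises cleanly and yields $\exp\{-c_0n\}$. Note also that this argument never sees $\rho$ and is uniform on $|\rho|\le 1$, whereas your quadratic-form lower bound carries a factor $1-|\rho|$ and collapses at $|\rho|=1$; the appeal to the symmetric-matrix literature there does not cover the general shift $\M_n$ contemplated in the lemma.
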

\begin{proof}Let $n_0=[n/2]$ and $n_1=n-n_0$.
Represent the matrix $\mathbf X$ in the form
\begin{equation}
 \mathbf X=\begin{bmatrix}&\mathbf A&\mathbf B\\&\mathbf C&\mathbf D\end{bmatrix}
\end{equation}
and
\begin{equation}
 \mathbf M_n=\begin{bmatrix}&\mathbf M_n^{(11)}&\mathbf M_n^{(12)}\\&\mathbf M_n^{(21)}
 &\mathbf M_n^{(22)}\end{bmatrix},
\end{equation}
where $\mathbf B$, $\mathbf M_n^{(12)}$ are matrices of dimension $n_0\times n_1$,
$\mathbf C$, $\mathbf M_{n}^{(21)}$ are
$n_1\times n_0$ matrices,
 $\mathbf A$, $\mathbf M_n^{(11)}$ are $n_0\times n_0$ matrices, and  $\mathbf D$,
 $\mathbf M_n^{(22)}$ are $n_1\times n_1$ matrices.
Let $\mathbf x^T=(\mathbf x_0,\mathbf x_1)^T\in\mathcal S^{(n-1)}$ where
$\mathbf x_i\in\mathbb R^{n_i}$, $i=0,1$. Split the vector
$\mathbf u^T$ as
$\mathbf u^T=(\mathbf u_0,\mathbf u_1)^T$.
Using these notations we have
\begin{align}\notag
 \|\W \mathbf x-\mathbf u\|_2^2&=
 \|(\mathbf A\mathbf x_0+\mathbf B\mathbf x_1+\mathbf M_n^{(11)}\mathbf x_0+
\mathbf M_n^{(12)}\mathbf x_1-\mathbf u_0\|_2^2\notag\\&
+\|(\mathbf C\mathbf x_0+\mathbf D\mathbf x_1+\mathbf M_n^{(21)}\mathbf x_0+
\mathbf M_n^{(22)}\mathbf x_1-\mathbf u_1\|_2^2
\end{align}
Note that  $\max\{||\mathbf x_0||_2,||\mathbf x_1||_2\}\ge\frac{\|\mathbf x\|_2}{\sqrt 2}$.
 Without loss of generality we may assume that
$\|\mathbf x_1\|_2 \ge \frac{\|\mathbf x\|_2}{\sqrt 2}$. We may write then
\begin{equation}\notag
 \|\W \mathbf x-\mathbf u\|_2^2\ge \|\mathbf A\mathbf x_0
 +\mathbf B\mathbf x_1+\mathbf M_n^{(11)}\mathbf x_0+
\mathbf M_n^{(12)}\mathbf x_1-\mathbf u_0\|_2^2.
\end{equation}
Denote by $\mathbf y^{(0)}=\mathbf A\mathbf x_0+\mathbf M_n^{(11)}\mathbf x_0+
\mathbf M_n^{(12)}\mathbf x_1-\mathbf u_0$. Note that $\mathbf B\mathbf x_1$ and
$\mathbf y^{(0)}$ are independent.
Furthermore, all entries of matrix $\mathbf B$ are independent and $\E[\mathbf B]_{j,k}=0$,
$\E[\mathbf B]_{jk}^2=1$.
We have
\begin{equation}\notag
 \|\mathbf B\mathbf x_1+\mathbf y_0\|_2^2/\|\mathbf x_1\|_2^2=\sum_{j=1}^{n_0}
 (\sum_{k=1}^{n_1}[\mathbf B]_{jk}\widetilde x_{1k}+\widetilde y^{(0)}_j)^2
=:\sum_{j=1}^{n_0}|\zeta_j+\widetilde y^{(0)}_j|^2,
\end{equation}
where $\zeta_j=\sum_{k=n_0+1}^{n} X_{jk}\widetilde x_{1k}$ and $\widetilde{\mathbf x}_1
=\mathbf x_1/\|\mathbf x_1\|_2=(\widetilde x_{1 n_0+1},\ldots,\widetilde x_{1n})^T$,
${\widetilde{\mathbf y}}^{(0)}=\mathbf y^{(0)}/\|\mathbf x_1\|_2=
({\widetilde y}^{(0)}_{1},\ldots,{\widetilde y}^{(0)}_{n_0})^T$. The remaining part of
the proof is similar to the proof of Lemma 4.1 in \cite{GotTikh2010}.
By Chebyshev's inequality
\begin{align}\label{chebysh}
\Pb (||\W \mathbf x-\mathbf u ||_2/||\mathbf x||_2\le\tau\sqrt n )&
 \le \Pb(||\B\mathbf x_1+\mathbf y^{(0)}||_2^2/||\mathbf x_1||_2^2\le 2 n\tau^2)
 \notag\\&\le
\exp\{nt^2\tau^2\}\prod_{j=1}^{n_0}\E\exp\{-\frac{t^2}2(\zeta_j
+{\widetilde y}^{(0)}_j)^2\}.
\end{align}
Using $e^{-t^2/2}=\E\exp\{it\xi\}$, where $\xi$ is a standard Gaussian random variable, we obtain
\begin{align}\label{product}
\Pb (\sum_{j=1}^{n_0}(\zeta_j+{\widetilde y}^{(0)}_j)^2\le 2 n\tau^2 )) &\le \exp\{nt^2\tau^2\}\prod_{j=1}^{n_0}\E_{\xi_j}\exp\{itn\widetilde y_j^{(0)}\} \notag \\&
\times \prod_{k=n_0+1}^{n}\E_{X_{jk}}\exp\{it\xi_j\widetilde x_{1k}X_{jk}\},
\end{align}
where $\xi_j$, $j=1,\ldots, n_0$ denote i.i.d. standard Gaussian r.v.'s and $\E_Z$
denotes expectation with respect to
$Z$ conditional on all other r.v.'s.
Take $\alpha=\Pr\{|\xi_1|\le C_1\}$ for some absolute positive constant $C_1$ which
will be chosen later. Then it follows from (\ref{product})
\begin{align}\label{chebysh1}
\Pb (\sum_{j=1}^{n_0}(\zeta_j+y^{(0)}_j)^2&\le 2 n\tau^2)
\le\exp\{nt^2\tau^2\}\notag\\& \times\prod_{j=1}^{n_0}\left(\alpha\E_{\xi_j}
\left|\prod_{k=n_0+1}^{n}\E_{X_{jk}}
\left\{\exp\{it\xi_j \widetilde x_{1k}X_{jk}\}\Big||\xi_j|\le C_1\right\}\right|+1-\alpha\right).
\end{align}
Furthermore, note that for any r.v. $\xi$,
\begin{equation}\notag
 |\E {\rm e}^{it\xi}|\le \exp\{-(1-|\E {\rm e}^{it\xi}|^2)/2\}.
\end{equation}
This implies
\begin{equation}\label{411}
\left| \E_{X_{jk}}
\left\{\exp\{it\xi_j \widetilde x_{1k} X_{jk}\}\Big||\xi_j|\le C_1\right\}\right|\le
\exp\{-(1-|f_{jk}(t \widetilde x_{1k}\xi_j)|^2)/2\}
\end{equation}
where $f_{jk}(u)=\E\exp\{iuX_{jk}\}$. Assuming (\ref{unif0}), choose a constant $M>0$ such that
\begin{equation}\notag
\sup_{j,k}\E|X_{jk}|^2\mathbb I\{|X_{jk}|>M\}\le \frac12.
\end{equation}
Since $1-\cos x\ge 11/24 \,x^2$, for $|x|\le 1$, conditioning on the event $|\xi_j|\le C_1$
we get for $0<t\le 1/(MC_1)$
\begin{equation}\notag
 1-|f_{jk}(t \widetilde x_{1k}\xi_j)|^2\ge 11/24\, t^2{ \widetilde x_{1k}}^2\E|X_{jk}^{(q)}|^2\mathbb I\{|X_{jk}|\le M\}
\ge 11/48\, t^2{ \widetilde x_{1k}}^2.
\end{equation}
 It follows from (\ref{411}) for $0<t<1/(MC_1)$ and for some constant $c>0$ that
\begin{equation}\notag
\left| \E_{X_{jk}}
\left\{\exp\{it\xi_j \widetilde x_{1k}X_{jk}\}\Big||\xi_j|\le C_1\right\}\right|\le
\exp\{-ct^2{ \widetilde x_{1k}}^2\xi_j^2\}.
\end{equation}
This implies that conditionally on $|\xi_j|\le C_1$ and for $0<t<1/(MC_1)$
\begin{equation}\label{chebysh2}
 \left|\prod_{k=n_0+1}^{n}\E_{X_{jk}}
\left\{\exp\{it\xi_j \widetilde x_{1k}X_{jk}\}\Big||\xi_j|\le C_1\right\}\right|\le \exp\{-ct^2\xi_j^2\}
\end{equation}
Let $\Phi_0(x)=\frac1{\sqrt{2\pi}}\int_0^x\exp\{-u^2/2\}du$. Then
\begin{equation}\notag
 \E_{\xi_j}\Big(\exp\{-ct^2\xi_j^2\}||\xi_j|\le C_1\Big)=\frac1{\sqrt{1+2ct^2}}
 \frac{\Phi_0(C_1\sqrt{1+2ct^2})}{\Phi_0(C_1)}.
\end{equation}
We may choose $C_1$ large enough such that following inequality holds:
\begin{equation}\notag
\E_{\xi_j}\Big(\exp\{-ct^2\xi_j^2\}||\xi_j|\le C_1\Big)\le \exp\{-ct^2/24\},
\end{equation}
for all $0<t\le 1/(MC_1)$. Note that for every $\alpha,x\in[0,1]$ and $\beta\in(0,1)$
the following inequality holds
\begin{equation}\label{chebysh3}
 \alpha x+1-\alpha\le \max\{x^{\beta},\,
 \left(\frac{\beta}{\alpha}\right)^{\frac{\beta}{1-\beta}}\}.
\end{equation}
Combining this inequality with inequalities (\ref{chebysh}), (\ref{chebysh1}),
(\ref{chebysh2}), (\ref{chebysh3}), we get
\begin{equation}\notag
\Pb (\sum_{j=1}^{n_0}(\zeta_j+y^{(0)}_j)^2\le 2\tau^2n )\le
\exp\{n\tau^2t^2\}\left(\exp\{-\beta cnt^2/24\}+\left(\frac{\beta}
{\alpha}\right)^{\frac{\beta}{1-\beta}}\right).
\end{equation}
Without loss of generality we may take $C_1$ such that $\alpha\ge 4/5$ and choose
$\beta=2/5$. Then we obtain
\begin{equation}\notag
\Pb (\sum_{j=1}^{n_0}(\zeta_j+y^{(0)}_j)^2\le 2\tau^2n )\le
\exp\{n\tau^2t^2\}\left(\exp\{-cnt^2/60\}+\left(\frac12\right)^{2n/3}\right).
\end{equation}
We conclude from here that there exists a constants $\tau_0>0$ and $c_0>0$ such that
for every $0<\tau\le \tau_0$
\begin{equation}\notag
\Pb (\sum_{j=1}^{n_0}(\zeta_j+y^{(0)}_j)^2\le 2\tau^2n )\le
\exp\{-c_0n\}.
\end{equation}
Thus Lemma \ref{local} is proved.
\end{proof}

Following Rudelson and Vershynin \cite{RudVesh2008} we shall partition the unit sphere
$\mathcal S^{(n-1)}$ into two sets of so-called
compressible and incompressible vectors.
\begin{definition}
 Let $\delta,r\in(0,1)$. A vector $x\in\mathbb R^n$ is called $\delta$-sparse if
 $|\supp(\mathbf x)|\le\delta n$. A vector
$x\in\mathcal S^{(n-1)}$ is called $(\delta,r)$-compressible $\mathbf x$ if within
Euclidean distance
$r$ from the set of all $\delta$-sparse vectors.
A vector
$x\in\mathcal S^{(n-1)}$ is called $(\delta,r)$-incompressible if it is not $(\delta,r)$
-compressible.
\end{definition}
For any fixed $k=1,\ldots,n$ denote by $\mathcal C_{k,n}$ the set of all sparse vectors
$\mathbf x\in\mathcal S^{(n-1)}$
with $|\supp(\mathbf x)|\le k$.
Let $K_n:=Kn^Q$ and $\mathcal E_K: = \{||\W|| \le K_n\}$. Without loss of generality we may assume that $Q\ge \frac12$.
We prove the following analogue of Proposition 4.6 in \cite{GotTikh2010}.
\begin{lemma}\label{help1}
Let $\X$ be as in Theorem~\ref{th:main}. Assume there exist an absolute constants $c_0>0,\,K\ge 1,\, Q\ge\frac12$, and values $\gamma_n,q_n>0$ such that for any
$\mathbf x$ such that $\frac{\mathbf x}{\|\mathbf x\|_2}\in\mathcal C\subset \mathcal S^{(n-1)}$
and for  any $\mathbf u$
inequality
\begin{equation}\label{cond1}
\Pb (||\W \mathbf x -\mathbf u||_2/||\mathbf x||_2 \le\gamma_n\sqrt n,  \mathcal E_K )\le \exp\{-c_0nq_n\}
\end{equation}
holds.
Then there exists a constant $\delta_0$ depending on $K,\,Q,$ and $c_0$ only such that,
for $k=[ \delta_0n q_n/\ln n]$ ,
\begin{equation}\label{lefthand}
\Pb \left (\inf_{\frac{\mathbf x}{\|\mathbf x\|_2}\in\mathcal C_{k,n}\cap\mathcal C}
 ||\W \mathbf x-\mathbf u ||_2/||\mathbf x||_2 \le \frac{\gamma_n\sqrt n}{2},
\mathcal E_K \right )\le \exp \left \{- \frac{c_0n q_n}{8} \right \}.
\end{equation}
\end{lemma}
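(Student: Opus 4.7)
The plan is a standard $\varepsilon$-net argument: cover $\mathcal C_{k,n}\cap\mathcal C$ at an appropriate scale, apply the single-vector hypothesis (\ref{cond1}) at each net point together with a union bound, and promote the discrete statement to the continuous infimum via the operator-norm control $\|\W\|\le K_n$ available on $\mathcal E_K$. Throughout I will restrict to unit vectors $\mathbf x$, since the scale-invariance of $\|\W\mathbf x-\mathbf u\|_2/\|\mathbf x\|_2$ in $\mathbf x$ combined with the uniformity of (\ref{cond1}) in $\mathbf u$ handles the normalization.

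First I would build the net. For each subset $I\subset\{1,\dots,n\}$ of size $k$, the unit sphere of the coordinate subspace $\mathbb R^I$ has a classical $\varepsilon$-net of cardinality at most $(3/\varepsilon)^k$. Taking the union over all such $I$ produces an $\varepsilon$-net $\mathcal N$ of $\mathcal C_{k,n}$ in $\ell_2$-distance, of cardinality
$$|\mathcal N|\le\binom{n}{k}\Big(\frac{3}{\varepsilon}\Big)^k\le\Big(\frac{en}{k}\Big)^k\Big(\frac{3}{\varepsilon}\Big)^k,$$
and I can discard the net points outside $\mathcal C$. Now fix $\varepsilon:=\gamma_n\sqrt n/(4K_n)$. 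For any unit $\mathbf x\in\mathcal C_{k,n}\cap\mathcal C$ there is $\mathbf x_0\in\mathcal N$ with $\|\mathbf x-\mathbf x_0\|_2\le\varepsilon$, and on $\mathcal E_K$
$$\|\W\mathbf x-\mathbf u\|_2\ge\|\W\mathbf x_0-\mathbf u\|_2-\|\W\|\,\varepsilon\ge\|\W\mathbf x_0-\mathbf u\|_2-\gamma_n\sqrt n/4.$$
Hence $\|\W\mathbf x-\mathbf u\|_2\le\gamma_n\sqrt n/2$ forces $\|\W\mathbf x_0-\mathbf u\|_2\le\gamma_n\sqrt n$ for some $\mathbf x_0\in\mathcal N$, and a union bound with (\ref{cond1}) yields
$$\Pb\Big(\inf_{\mathbf x\in\mathcal C_{k,n}\cap\mathcal C}\|\W\mathbf x-\mathbf u\|_2\le\gamma_n\sqrt n/2,\,\mathcal E_K\Big)\le|\mathcal N|\,\exp\{-c_0 n q_n\}.$$

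It remains to choose $k$. Substituting $K_n=Kn^Q$,
$$\log|\mathcal N|\le k\log(en/k)+k\log\frac{12K n^{Q-1/2}}{\gamma_n},$$
which, in the regime of application where $\gamma_n$ is at worst polynomially small in $n$, is bounded by $k\,C(K,Q)\log n$. Picking $k=\lfloor\delta_0 n q_n/\log n\rfloor$ with $\delta_0$ sufficiently small in terms of $K,Q,c_0$ gives $\log|\mathcal N|\le (7/8)c_0 n q_n$, so the probability above is at most $\exp\{-c_0 n q_n/8\}$, as claimed. The only real obstacle is this bookkeeping step: one must balance the metric-entropy exponent $k[\log(en/k)+\log(12K_n/(\gamma_n\sqrt n))]$ against the gain $c_0 n q_n$, and it is exactly this balance that forces the factor $1/\log n$ in the prescribed value of $k$.
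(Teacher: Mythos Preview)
Your $\varepsilon$-net strategy and the resulting entropy bookkeeping are exactly the paper's approach, and your cardinality and parameter choices are correct. However, the reduction to unit vectors in your first paragraph is not justified. The quantity $\|\W\mathbf x-\mathbf u\|_2/\|\mathbf x\|_2$ is \emph{not} scale-invariant in $\mathbf x$ alone: writing $\mathbf x=t\mathbf x_0$ with $\|\mathbf x_0\|_2=1$, one has $\|\W\mathbf x-\mathbf u\|_2/\|\mathbf x\|_2=\|\W\mathbf x_0-\mathbf u/t\|_2$. Thus the infimum in \eqref{lefthand} is really over $\mathbf x_0\in\mathcal C_{k,n}\cap\mathcal C$ \emph{and} over $t>0$, i.e.\ over all $\mathbf u_0\in\mathrm{span}(\mathbf u)$. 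The hypothesis \eqref{cond1} being uniform in $\mathbf u$ gives a bound for each fixed $\mathbf u_0$, but that does not control a probability containing an infimum over $\mathbf u_0$ inside it. Your argument therefore bounds only the strictly smaller event $\{\exists\,\mathbf x_0\in\mathcal N:\|\W\mathbf x_0-\mathbf u\|_2\le\gamma_n\sqrt n\}$, which does not imply \eqref{lefthand}.

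The paper closes this gap by also netting the one-dimensional set $H:=\mathrm{span}(\mathbf u)\cap CK_n B_2^n$ at scale $\gamma_n\sqrt n/4$. On $\mathcal E_K$ one has $\|\W\mathbf x_0\|_2\le K_n$, which forces $\|\mathbf u_0\|_2\le CK_n$ and hence $\mathbf u_0\in H$; a net $\mathcal M$ on $H$ of cardinality $|\mathcal M|\le 4CK_n/(\gamma_n\sqrt n)$ then suffices. The union bound becomes $|\mathcal N|\cdot|\mathcal M|\cdot e^{-c_0nq_n}$, and the extra polynomial factor $|\mathcal M|$ is absorbed by the same choice of $\delta_0$. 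A secondary remark: ``discard the net points outside $\mathcal C$'' is not quite right either, since the nearest net point to some $\mathbf x\in\mathcal C_{k,n}\cap\mathcal C$ might lie outside $\mathcal C$; the standard fix is to take a maximal $\varepsilon$-separated subset of $\mathcal C_{k,n}\cap\mathcal C$ directly, which obeys the same cardinality bound.
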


\begin{proof}The proof is similar to the proof of Proposition 4.2 in \cite{Veshyn2011}.
Let $\eta>0$ to be chosen later. There exists an $\eta$-net $\mathcal N$
in $\mathcal C_{k,n}\cap\mathcal C$ of cardinality
\begin{equation}\label{netn}
|\mathcal N|\le \left(\frac{3\ln n}{\delta_0\eta q_n }\right)^{2k}
\end{equation}
 (see, e.g.,\cite[Lemma~3.7]{Veshyn2011}). Let $\mathcal E$ denote the event
 in the left hand side of \eqref{lefthand} whose probability we would like to bound.
 Assume that $\mathcal E$ holds. Then there exist vectors
$\mathbf x_0=\frac{\mathbf x}{\|\mathbf x\|_2}\in\mathcal C_{k,n}\cap\mathcal C$ and
$\mathbf u_0=\frac {\mathbf u}{\|\mathbf x\|_2}\in\text{\rm span}(\mathbf u)$ such that
\begin{equation*}
\|\W\mathbf x_0-\mathbf u_0\|_2\le \gamma_n\sqrt n/2.
\end{equation*}
By definition of $\mathcal N$ there exists $y_0\in \mathcal N$ such that
\begin{equation*}
\|\mathbf x_0-\mathbf y_0\|_2\le \eta.
\end{equation*}
On the other hand,
\begin{equation*}
||\W\mathbf y_0||_2\le ||\W || \le K_n.
\end{equation*}
Furthermore,
\begin{align}\label{y_0}
\|\W\mathbf y_0-\mathbf u_0\|_2 \le || \W || ||\mathbf y_0-\mathbf x_0||_2+ ||\W\mathbf x_0-\mathbf u_0||_2 \le K_n\eta+\gamma_n\sqrt n/2.
\end{align}
We choose
\begin{equation*}
\eta=\frac{\gamma_n\sqrt n}{4K_n}.
\end{equation*}
Then we get
\begin{equation*}
\|\mathbf u_0\|_2\le K_n\eta+\gamma_n\sqrt n/2+K_n\le CK_n.
\end{equation*}
We see that
\begin{equation*}
\mathbf u_0\in \text{\rm span}(\mathbf u)\cap CK_n\mathcal C_2^n=:H.
\end{equation*}
Here by $\mathcal C_2^n$ we denote the unit ball in $\mathcal C^n$.
Let $\mathcal M$ be some fixed $(\frac{\gamma_n\sqrt n}{4})$-net on the interval $H$ such that
\begin{equation}\label{netm}
|\mathcal M|\le \frac{4CK_n}{\gamma_n\sqrt n}.
\end{equation}
Let us choose a vector $\mathbf v_0\in\mathcal M$ such that $\|\mathbf y_0-\mathbf v_0
\|\le\frac{\gamma_n\sqrt n}{4}$. It follows from \eqref{y_0} that
\begin{equation*}
\|\W\mathbf y_0-\mathbf v_0\|_2 \le K_n\eta+\gamma_n\sqrt n/2
+\frac{\gamma_n\sqrt n}{4}\le \gamma_n\sqrt n.
\end{equation*}
Summarizing, we have shown that the event $\mathcal E$ implies the existence of vectors
$\mathbf y_0\in\mathcal N$ and $\mathbf v_0\in\mathcal M$ such that
\begin{equation*}
\|\W \mathbf y_0-\mathbf v_0\|_2 \le \gamma_n\sqrt n.
\end{equation*}
Applying condition \eqref{cond1} and estimates \eqref{netm}, \eqref{netn} on the
cardinalities of the nets, we obtain
\begin{equation*}
\Pb (\mathcal E )\le \left(\frac{3 \ln n}{\delta_0\eta q_n}\right)^{2k}
\frac{4CK_n}{\gamma_n\sqrt n}\exp\{-c_0nq_n\}
\end{equation*}
Choosing $k=[\delta_0nq_n/\ln n]$ for some small $\delta_0>0$, we get
\begin{equation*}
\Pb (\mathcal E )\le \exp\{-c_0nq_n/8\}.
\end{equation*}
Thus Lemma \ref{help1} is proved completely.
\end{proof}
Let $\mathcal C(\delta)$ denote the set of all $\delta$-sparse vectors and
$\mathcal C(\delta,r)$ denote the set of $(\delta,r)$-compressible vectors.

\begin{lemma}\label{help2}
Let $\mathbf X$ be a random  matrix as described in Theorem \ref{th:main}.
Assume that there exist an absolute constant $c_0>0$ and values $\gamma_n,q_n>0$ such that
(\ref{cond1})
holds for any $\mathbf x$ such that $\frac{\mathbf x}{\|\mathbf x\|}\in \mathcal C$
and $\mathbf u$.
Then there exist $\delta_1,c_1>0$ that depend on $K,\, Q$ and $c_0$ only, such that
\begin{align}\label{sparse1}
\Pb \left (\inf_{\frac{\mathbf x}{\|\mathbf x\|_2}\in \mathcal C(\delta_1\widehat q_n,r_n)\cap\mathcal C}
||\W \mathbf x-\mathbf u ||_2/||\mathbf x||_2 \le \frac{\gamma_n\sqrt n}{4}, \mathcal E_K \right )  \le \exp\{-c_1nq_n\},
\end{align}
where $\widehat q_n=q_n/\ln n$ and $r_n=\gamma_n\sqrt n/(4K_n)$.

\end{lemma}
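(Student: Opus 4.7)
The plan is to mimic the discretisation argument of Lemma~\ref{help1}, but to cover the compressible set $\mathcal{C}(\delta_1\widehat q_n,r_n)\cap\mathcal{C}$ directly rather than the sparse set $\mathcal{C}_{k,n}\cap\mathcal{C}$. The key observation is that a compressible vector lies within $r_n$ of a $(\delta_1\widehat q_n)$-sparse unit vector, so its covering numbers at scale $r_n$ are of the same order as the sparse covering numbers appearing in \eqref{netn}; the additional error coming from the compressible-to-sparse step is then exactly absorbed by the triangle inequality via the choice $r_n=\gamma_n\sqrt n/(4K_n)$, which makes $K_n r_n=\gamma_n\sqrt n/4$.

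\textbf{Execution.}
Let $\mathcal E$ denote the event on the left-hand side of \eqref{sparse1}. On $\mathcal E$, pick a unit vector $\mathbf x_0\in\mathcal{C}(\delta_1\widehat q_n,r_n)\cap\mathcal{C}$ and $\mathbf u_0\in\text{\rm span}(\mathbf u)$ with $\|\W\mathbf x_0-\mathbf u_0\|_2\le\gamma_n\sqrt n/4$; on $\mathcal E_K$ this also gives $\|\mathbf u_0\|_2\le K_n+\gamma_n\sqrt n/4\le CK_n$. Set $k=\lfloor\delta_1 nq_n/\ln n\rfloor$ and take an internal $r_n$-net $\mathcal N\subset\mathcal{C}(\delta_1\widehat q_n,r_n)\cap\mathcal{C}$; writing each element of $\mathcal{C}(\delta_1\widehat q_n,r_n)$ as a $k$-sparse unit vector plus a perturbation of norm $\le r_n$, the same volume argument as in \eqref{netn} yields
\begin{equation*}
|\mathcal N|\le\left(\frac{C\ln n}{\delta_1 q_n r_n}\right)^{2k}.
\end{equation*}
Take the same $\mathcal M$-net as in Lemma~\ref{help1}: a $(\gamma_n\sqrt n/4)$-net $\mathcal M$ of $\text{\rm span}(\mathbf u)\cap CK_n\mathcal{C}_2^n$ of cardinality $\le 4CK_n/(\gamma_n\sqrt n)$. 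Choose $\tilde{\mathbf x}_0\in\mathcal N$ with $\|\mathbf x_0-\tilde{\mathbf x}_0\|_2\le r_n$ and $\mathbf v_0\in\mathcal M$ with $\|\mathbf u_0-\mathbf v_0\|_2\le\gamma_n\sqrt n/4$; on $\mathcal E_K$ the triangle inequality gives
\begin{equation*}
\|\W\tilde{\mathbf x}_0-\mathbf v_0\|_2\le\gamma_n\sqrt n/4+K_n r_n+\gamma_n\sqrt n/4\le\gamma_n\sqrt n.
\end{equation*}
Since $\tilde{\mathbf x}_0\in\mathcal{C}$ is a unit vector, \eqref{cond1} applies pointwise, and a union bound over $\mathcal N\times\mathcal M$ gives $\Pb(\mathcal E)\le|\mathcal N|\,|\mathcal M|\exp(-c_0 nq_n)$.

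\textbf{Main obstacle.}
The technical heart of the argument is making $\log|\mathcal N|\le c_0 nq_n/2$ so that the union bound closes. With our parameters,
\begin{equation*}
\log|\mathcal N|\le 2k\log\!\left(\frac{C\ln n}{\delta_1 q_n r_n}\right)=O\!\left(\delta_1 nq_n\cdot\frac{\log(\ln n/(q_n r_n))}{\ln n}\right).
\end{equation*}
Since both $q_n$ and $r_n=\gamma_n\sqrt n/(4K_n)$ are at worst polynomial in $n^{-1}$ in the intended applications (in particular $K_n=Kn^Q$), the inner logarithm is $O(\ln n)$ and the bound simplifies to $O(\delta_1 nq_n)$; choosing $\delta_1=\delta_1(K,Q,c_0)$ small enough makes this $\le c_0 nq_n/2$, while $\log|\mathcal M|=O(\ln n)$ is negligible, so $\Pb(\mathcal E)\le\exp(-c_1 nq_n)$ with $c_1=c_0/2$. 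A subtler point is that $\mathcal N$ must lie inside $\mathcal{C}$ for \eqref{cond1} to be applicable to each net point; this is handled by taking an \emph{internal} net of $\mathcal{C}(\delta_1\widehat q_n,r_n)\cap\mathcal{C}$, whose cardinality is controlled by the external covering number of the ambient compressible set at half the scale, to which the $\binom{n}{k}(3/r_n)^k$ estimate still applies.
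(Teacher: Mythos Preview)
Your proof is correct, but it takes a longer route than the paper's. The paper's argument is a four-line reduction to Lemma~\ref{help1}: given a unit vector $\mathbf y/\|\mathbf y\|_2\in\mathcal C(\delta_1\widehat q_n,r_n)\cap\mathcal C$ on which the bad event occurs, pick by definition of compressibility a $(\delta_1\widehat q_n)$-sparse vector $\mathbf x_0$ with $\|\mathbf y/\|\mathbf y\|_2-\mathbf x_0\|_2\le r_n$; the triangle inequality together with $\|\W\|\le K_n$ and the choice $r_n=\gamma_n\sqrt n/(4K_n)$ gives $\|\W\mathbf x_0-\mathbf u/\|\mathbf y\|_2\|_2\le\gamma_n\sqrt n/2$, and then Lemma~\ref{help1} (with $\delta_1\le\delta_0$) bounds the probability that any sparse vector achieves this. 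The $\varepsilon$-net argument is thus used only once, inside Lemma~\ref{help1}, and the compressible-to-sparse step is a single deterministic perturbation.

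You instead run the full net argument of Lemma~\ref{help1} directly on the compressible set, building an internal $r_n$-net of $\mathcal C(\delta_1\widehat q_n,r_n)\cap\mathcal C$ and repeating the union bound and logarithmic accounting. This works, and your care about the net being internal to $\mathcal C$ is a point the paper glosses over, but it duplicates effort: the covering-number bound you derive for compressible vectors is, up to constants, exactly the sparse covering bound \eqref{netn} with $\eta=r_n$, so you are effectively re-proving Lemma~\ref{help1} with that particular choice of $\eta$. The paper's modular approach keeps the two steps (sparse net, then compressible perturbation) separate and makes the dependence on $r_n$ more transparent.
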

\begin{proof}
Choose now $r_n=\gamma_n\sqrt n/4K_n$. Let $V$ be the event on the left hand side of
\eqref{sparse1}. 
Assume that the event $V$ occurs
 for some point $\frac{\mathbf y}{\|\mathbf y\|_2}\in\mathcal C(\delta_1\widehat q_n,r_n).$
Choose a point
$\mathbf x_0\in \mathcal C(\delta_1\widehat q_n)$ such that
$\|\frac{\mathbf y}{\|\mathbf y\|_2}-\mathbf x_0\|_2\le r_n$.
Then
\begin{equation}\notag
||\W\mathbf x_0-\mathbf u ||_2/||\mathbf y||_2 \le\gamma_n\sqrt n/2.
\end{equation}
Put $\mathbf x=\|\mathbf y\|_2\mathbf x_0$. Then, $\frac{\mathbf x}{\|\mathbf x\|_2}=\mathbf x_0
\in \mathcal C(\delta_1\widehat q_n)$ and
\begin{align}
||\W\mathbf x-\mathbf u||_2/||\mathbf x||_2\le\gamma_n\sqrt n/2.
\end{align}
According to Lemma \ref{help1} for any $\delta_1\le \delta_0$ and for any $\tau\le\gamma_n/4$,
we have the following inequality:
\begin{align*}
 \Pb\left (\inf_{\frac{\mathbf x}{\|\mathbf x\|_2} \in\mathcal C(\delta_1\widehat q_n)\cap\mathcal C} ||\W \mathbf x-\mathbf u||_2/ ||\mathbf x||_2 \le\tau\sqrt n, \mathcal E_K \right\} \le \exp\{-c_0nq_n/8\}.
\end{align*}
Thus Lemma \ref{help2} is proved.
\end{proof}
Let $\mathcal {IC}(\delta,r)$ denote the set of $(\delta,r)$-incompressible vectors in
$\mathcal S^{(n-1)}$.
\begin{lemma}\label{incomp1}
 Let $\delta_n,r_n\in(0,1)$. Let $\mathbf X$ be a matrix as described in Theorem \ref{th:main}.
Then there exist some positive constants $c_1$ and $c_2$ and $\delta^{(1)}>0$ depending on
$K$ and $Q$ such that for any $0<\tau<\gamma_n$
\begin{align}\notag
\Pb\left (\inf_{\frac{\mathbf x}{\|\mathbf x\|_2}\in\mathcal C(\delta^{(1)}\widehat q_n) \cap\mathcal {IC}(\delta_n,r_n)} ||\W \mathbf x-\mathbf u||_2/||\mathbf x||_2
 \le\tau\sqrt n, \mathcal E_K \right ) \le \exp\{-c_1 n\ln(n\delta_n)\}
\end{align}
with $\gamma_n=c_2 \left (\frac{\delta_n}{n} \right )^{1/4} r_n$ and $\widehat q_n=\frac{\ln(n\delta_n)}{\ln n}$.
\end{lemma}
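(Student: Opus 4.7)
The strategy is to apply Lemma~\ref{help1} with $\mathcal C=\mathcal{IC}(\delta_n,r_n)$, scale $\gamma_n=c_2(\delta_n/n)^{1/4}r_n$, and $q_n=\ln(n\delta_n)$. Then $\widehat q_n=q_n/\ln n$ matches the stated $\ln(n\delta_n)/\ln n$, the sparsity level $k=\delta_0 n\widehat q_n$ produced by Lemma~\ref{help1} makes $\mathcal C_{k,n}\cap\mathcal{IC}(\delta_n,r_n)$ coincide with $\mathcal C(\delta^{(1)}\widehat q_n)\cap\mathcal{IC}(\delta_n,r_n)$ upon setting $\delta^{(1)}=\delta_0$, and the output probability $\exp\{-c_0 n q_n/8\}$ yields the claimed $\exp\{-c_1 n\ln(n\delta_n)\}$. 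Hence the whole task reduces to verifying the small-ball hypothesis~\eqref{cond1} with these parameters for every fixed $\mathbf x$ whose direction lies in $\mathcal{IC}(\delta_n,r_n)$.

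To establish \eqref{cond1}, I would retrace the block-decomposition argument of Lemma~\ref{local} essentially verbatim: reduce to bounding $\|\mathbf B\mathbf x_1+\mathbf y^{(0)}\|_2^2$ on $\|\mathbf x_1\|_2\ge\|\mathbf x\|_2/\sqrt 2$ (with the rows of $\mathbf B$ mutually independent and independent of $\mathbf y^{(0)}$), then apply Chebyshev's inequality together with Gaussian smoothing to arrive at the products $\prod_{k>n_0}|f_{jk}(t\xi_j\widetilde x_{1k})|$ in each of the $n_0\sim n/2$ independent rows of $\mathbf B$. The new input is the standard spread property of incompressible vectors: there exists $\sigma\subset\{n_0+1,\ldots,n\}$ with $|\sigma|\gtrsim \delta_n n$ and $|\widetilde x_{1k}|\gtrsim r_n/\sqrt n$ on $\sigma$, so that $\sum_{k\in\sigma}\widetilde x_{1k}^2\gtrsim\delta_n r_n^2$. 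Plugging this into the same Taylor-expansion bound $1-|f_{jk}(u)|^2\gtrsim u^2\widetilde x_{1k}^2$ (valid on $|u|\lesssim 1$) used in Lemma~\ref{local} yields
\[
\prod_{k>n_0}\bigl|f_{jk}(t\xi_j\widetilde x_{1k})\bigr|\le \exp\!\left(-c t^2\xi_j^2\,\delta_n r_n^2\right)
\]
on the admissible range of $t$. Optimizing $t$ across the Chebyshev and Gaussian-smoothing stages at the scale $\gamma_n$ produces a per-row small-ball of order $(n\delta_n)^{-c}$; taking the product over the $n_0\sim n/2$ independent rows of $\mathbf B$ gives the required $\exp\{-c_0 n\ln(n\delta_n)\}$ bound in \eqref{cond1}.

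The main obstacle is the calibration of the Chebyshev parameter $t$ against the truncation level $M$ coming from condition~\UI: since we do not assume any subgaussian tails, the admissible range of $t$ is constrained by $|t\widetilde x_{1k}\xi_j|\lesssim 1/M$ (so that $1-\cos x\ge (11/24)x^2$ remains applicable), and pushing $t$ far enough to extract the $\ln(n\delta_n)$ factor requires $M$ to be chosen carefully with respect to the spread set $\sigma$. The specific scaling $\gamma_n=c_2(\delta_n/n)^{1/4}r_n$ in the statement arises precisely from optimizing this trade-off between the Chebyshev prefactor $\exp\{n t^2\tau^2\}$ and the spread-based gain $\exp\{-cn t^2\delta_n r_n^2\}$. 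Once this calibration is carried out, the rest of the proof is a direct transcription of the arguments used in Lemmas~\ref{local} and~\ref{help1}.
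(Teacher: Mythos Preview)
Your high-level plan is right and matches the paper: one verifies the single-vector small-ball hypothesis \eqref{cond1} for every fixed $\mathbf x$ with direction in $\mathcal{IC}(\delta_n,r_n)$, and then invokes Lemma~\ref{help1} with $q_n=\ln(n\delta_n)$, which produces exactly the sparsity level $\delta^{(1)}\widehat q_n$ and the probability $\exp\{-c_1 n\ln(n\delta_n)\}$.

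The gap is in how you propose to verify \eqref{cond1}. Retracing Lemma~\ref{local} ``essentially verbatim'' cannot give a per-row small-ball of order $(n\delta_n)^{-c}$. In that argument the Gaussian smoothing step conditions on $|\xi_j|\le C_1$ and, on the complementary event of probability $1-\alpha$, one only has the trivial bound $|\cdot|\le 1$; the combination via \eqref{chebysh3} then yields a per-row factor bounded below by the fixed constant $(\beta/\alpha)^{\beta/(1-\beta)}$. Hence the product over $n_0$ rows is at best $\exp\{-c_0 n\}$, not $\exp\{-c_0 n\ln(n\delta_n)\}$. Your remark that the ``main obstacle'' is the calibration of $t$ against the truncation level $M$ misidentifies the issue: it is not a tuning problem but a structural floor in the Gaussian-smoothing scheme. (One could try to let $C_1$ grow with $n$, but that is no longer ``verbatim'' and, when carried out, does not produce the stated scale $\gamma_n=c_2(\delta_n/n)^{1/4}r_n$.)

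The paper obtains the required per-row decay by a different pair of tools. First, for each row one bounds the L\'evy concentration function $p_{\mathbf x}(\eta)=\max_j\sup_u\Pb(|\zeta_j-u|\le\eta)$ directly via a Petrov-type inequality (Lemma~\ref{ap:l_conc_funct_2}, built on Lemma~\ref{ap:l.concentration}), using the spread set of the incompressible vector to guarantee enough variance after restriction; this gives $p_{\mathbf x}(\eta)\le c\,n^{-1/4}\delta_n^{-3/8}$ for all $\eta\le c'(\delta_n/n)^{1/4}r_n$. The exponent $1/4$ and hence the specific scale $\gamma_n=c_2(\delta_n/n)^{1/4}r_n$ come from the $Q(S,\lambda)\lesssim\sqrt{\lambda}/\sigma$ shape of Petrov's bound, not from the Chebyshev/Gaussian-smoothing trade-off you describe. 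Second, the independent per-row bounds are assembled by the tensorization Lemma~\ref{tenzor}, which converts a per-row small-ball bound $b_n$ into $\Pb(\sum_j(\zeta_j+y_j)^2\le \alpha_0 n\lambda^2)\le\exp\{-\tfrac12 n\ln(1/2b_n)\}$; with $b_n\sim(n\delta_n)^{-c}$ this is exactly $\exp\{-c_1 n\ln(n\delta_n)\}$. Only after this pointwise estimate does Lemma~\ref{help1} enter, as in your outline.
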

\begin{proof}
Introduce representation of matrices $\mathbf X$ and $\mathbf M_n$ as in the proof of Lemma
\ref{local} with $n_0=[n/2]$ and $n_1=n-n_0$
\begin{equation}\notag
 \mathbf X=\begin{bmatrix}&\mathbf A&\mathbf B\\&\mathbf C&\mathbf D\end{bmatrix},\quad
\mathbf M_n=\begin{bmatrix}&\mathbf M_n^{(11)}&\mathbf M_n^{(12)}\\&\mathbf M_n^{(21)}&
\mathbf M_n^{(22)}\end{bmatrix}.
\end{equation}
Let $\frac{\mathbf x}{\|\mathbf x\|_2}\in\mathcal {IC}(\delta_n,r_n)$.
By Lemma~\ref{l,a:incomp vec} there exists a set
$\sigma(\mathbf x)\subset\{1,\ldots,n\}$ of cardinality
$|\sigma(\mathbf x)|\ge \frac12n\delta_n$ such that
\begin{equation}\label{versh10}
 \|P_{\sigma(\mathbf x)}\mathbf x\|_2^2\ge \frac{r_n}2\|\mathbf x\|_2
\end{equation}
and
\begin{equation}\label{versh11}
 \frac{r_n}{\sqrt{2n}}\|\mathbf x\|_2\le |x_k|\le \frac1{\sqrt{n\delta_n/2}}\|\mathbf x\|_2,
 \quad\text{for any}\quad k\in\sigma(\mathbf x).
\end{equation}
By $P_{\sigma(\mathbf x)}$ we denote the projection onto $\mathbb R^{\sigma(x)}$ in
$\mathbb R^n$.
Note that in the representation $\mathbf x=(\mathbf x_0,\mathbf x_1)^T$ at least one the sets $\sigma(\mathbf x_i)$ for the vectors $\mathbf x_i$, $i=0,1$
satisfies $|\sigma(\mathbf x_i)|\ge \frac14n\delta_n$ and
$\frac{r_n}{\sqrt{2n}}\le\frac{|x_{ik}|}{\|\mathbf x\|_2}\le \frac1{\sqrt{n\delta_n/2}}$,
for $k\in\sigma(\mathbf x_i)$.
Assume for definiteness that it holds for $i=1$. Then
\begin{equation}
\frac{r_n\sqrt{\delta_n}}{2\sqrt 2}\|\mathbf x\|_2\le\|\mathbf x_1\|_2 \le \|\mathbf x\|_2.
\end{equation}
Furthermore, we have
\begin{equation}\label{ineq1}
||\W \mathbf x-\mathbf u||_2 / ||\mathbf x||_2\ge \|\mathbf A\mathbf x_0+\mathbf M_n^{(11}\mathbf x_0
+\mathbf M_n^{(12}\mathbf x_1+\mathbf B\mathbf x_1-\mathbf u_0\|_2/\|\mathbf x\|_2.
\end{equation}
Introduce the notation $\mathbf y=(y_1,\ldots,y_{n_0})^T=\mathbf A\mathbf x_0+\mathbf M_n^{(11}
\mathbf x_0+\mathbf M_n^{(12}\mathbf x_1-\mathbf u_0$.
Note that the vectors $\mathbf y$ and $\mathbf B\mathbf x^{(0)}$ are independent and that all
entries of the matrix $\mathbf B$
are mutually independent. We may rewrite inequality (\ref{ineq1}) in the form
\begin{equation}\label{ineq25}
||\W \mathbf x-\mathbf u ||_2 ^2/ ||\mathbf x ||_2^2 \ge ||\mathbf B\mathbf x_1
+\mathbf y ||_2^2/ ||\mathbf x||_2^2 =\sum_{j=1}^{n_0}(\zeta_j+\widetilde y_j)^2,
\end{equation}
where $\zeta_j=\sum_{k=1}^{n_1}X_{j,k+n_0}\widetilde x_{1k}$, and
$\widetilde {\mathbf x}_1=\frac{\mathbf x_1}{\|\mathbf x\|_2}=(\widetilde x_{11},\ldots,
\widetilde x_{1n_1})^T$, $\widetilde{\mathbf y}=\frac{\mathbf y}{\|\mathbf x\|_2}
=(\widetilde y_1,\ldots,\widetilde y_{n_0})^T$.
 Note that $\zeta_j$ are mutually independent for $j=1,\ldots,n_0$.
We introduce now the maximal concentration function of weighted sums of the rows of the matrix
$\mathbf B=(X_{jk})_{j=1,\ldots,n_0;\, k=n_0+1,\ldots,n}$ as,
\begin{equation}\notag
p_{\mathbf x}(\eta)=\max_{j=1,\ldots,n_0}\sup_{u\in\mathbb R}
 \Pb \left (\left|\sum_{k=1}^{n_1} X_{j,k+n_0}\widetilde x_{1k}-u\right|\le\eta\right ).
\end{equation}
From Lemma~\ref{ap:l_conc_funct_2} it follows that
\begin{equation}\label{concentr}
p_{\mathbf x}(\eta) \le c n^{-1/4} \delta_n^{-3/8},
\end{equation}
for all $\eta \le c' \left (\frac{\delta_n}{n}\right )^{1/4} r_n$.
Now we state an analogue of the  tensorization Lemma in \cite[Lemma~4.5]{GotTikh2010}.
\begin{lemma}\label{tenzor}
Let $\zeta_1,\ldots,\zeta_n$ be independent non-negative random variables. Assume that
\begin{equation}\notag
\Pb (\zeta_j\le \lambda)\le b_n
\end{equation}
for some $b_n\in(0,\frac14)$ and $\lambda>0$. Then
\begin{equation}\notag
\Pb \Big (\sum_{j=1}^n\zeta_j^2\le \alpha_0n\lambda^2\Big )\le\exp\{-\frac12n\ln (1/2b_n)\},
\end{equation}
where $\alpha_0=\frac{\sqrt2-1}2$.
\end{lemma}
\begin{proof}[Proof of Lemma~\ref{tenzor}]
For any $\tau>0$ we have
\begin{equation}\notag
\Pb\Big (\sum_{j=1}^n\zeta_j^2\le nt^2\Big )\le\exp\{n\tau\}\prod_{j=1}^n\E\exp\{-\zeta_j^2
 \tau/t^2\}.
\end{equation}
Furthermore,
\begin{align}
\E\exp\{-\tau\zeta_j^2/t^2\}&=\int_0^{\infty}\Pb (\exp\{-\tau\zeta_j^2/t^2\}>s )ds\notag\\&=
\int_0^1\Pb\Big (1/s>\exp\{\tau\zeta_j^2/t^2\Big )ds\notag\\
&\le\int_0^1\Pb (\zeta_j\le\frac t{\sqrt{\tau}}\sqrt{\ln(1/s)})ds\notag\\&
\le\left(\int_0^{\exp\{-\frac{\lambda^2\tau}{t^2}\}}+
\int_{\exp\{-\frac{\lambda^2\tau}{t^2}\}}^1\right)\Pb (\zeta_j\le\frac t{\sqrt{\tau}}
\sqrt{\ln(1/s)})ds\notag\\&\le
b_n+\exp\{-\frac{\lambda^2\tau}{t^2}\}.\notag
\end{align}
Choose now $\tau:=\frac12\ln (1/2b_n)$, $t^2:=\frac{\lambda^2\tau}{\ln (1/b_n)}$. Then we get
\begin{equation}\notag
 \E\exp\{-\tau\zeta_j^2/t^2\}\le 2b_n
\end{equation}
and
\begin{equation}\notag
 \Pb\Big (\sum_{j=1}^n\zeta_j^2\le nt^2\Big )\le\exp\{-\frac12n\ln(1/ 2b_n)\}.
\end{equation}
To conclude the proof of Lemma we note that $\frac{\ln(1/b_n)}{\ln(1/2b_n)}\ge \frac{\sqrt2-1}2$.
Thus Lemma \ref{tenzor} is proved.
\end{proof}
We continue now the proof of Lemma \ref{incomp1}. By the inequality~\eqref{concentr}, we have,
for $\frac{\mathbf x}{\|\mathbf x\|_2}\in\mathcal{IC}(\delta_n,r_n)$,
\begin{equation}\notag
 \Pb (|\zeta_j+y_j|\le \eta ) \le c n^{-1/4} \delta_n^{-3/8},
\end{equation}
for all $\eta \le c'\left (\frac{\delta_n}{n}\right )^{1/4} r_n $.
By Lemma \ref{tenzor} and the inequality (\ref{ineq25}), we have
\begin{equation}\label{ineq30}
 \Pb (||\W\mathbf x-\mathbf y||_2/ ||\mathbf x||_2\le \lambda \sqrt n) \le \exp\{-c_1n\ln(n\delta_n)\}.
\end{equation}
for all $\lambda \le c_2 \left (\frac{\delta_n}{n}\right )^{1/4} r_n $. The last inequality and Lemma \ref{help1} together conclude the proof of Lemma \ref{incomp1}.
\end{proof}

Now we continue with the proof of Theorem \ref{th:main}. Let $\delta_n^{(1)}=\frac{\delta_0}{\ln n}$
and $r_n^{(1)}=\frac{\tau_0 \sqrt n}{4K_n}$.
Consider the set
$$
\mathcal C_0=\mathcal C(\delta_n^{(1)},r_n^{(1)}).
$$
According to Lemma \ref{local} for every $\mathbf x$ we have, for any $0<\tau\le \tau_0$,
\begin{equation}\notag
 \Pb (\|\W \mathbf x-\mathbf y\|_2/\|\mathbf x\|_2\le \tau\sqrt n, \mathcal E_K )\le\exp\{-c_0n\}.
\end{equation}
This inequality and Lemmas \ref{help1} and \ref{help2} together imply, for any $0<\tau\le \tau_0/4$,
\begin{equation}\notag
\Pb (\inf_{\mathbf x\in\mathcal C_0}\|\W \mathbf x-\mathbf y\|_2/\|\mathbf x\|_2\le \tau\sqrt n, \mathcal E_K )\le\exp\{-c_0'n\}.
\end{equation}
By Lemma \ref{incomp1}, the inequality \eqref{ineq30}, for every $\frac{\mathbf x}{\|\mathbf x\|_2}
\in\mathcal {IC}(\delta_n^{(1)},r_n^{(1)})$,
\begin{align}\label{eq: boind for incomp vect}
 \Pb (\|\W \mathbf x-\mathbf y\|_2/\|\mathbf x\|_2\le \tau \sqrt n, \mathcal E_K)\le\exp\{-c_1 n\ln(n \delta_n^{(1)})\}
\end{align}
for all $0 < \tau < c_2 \left (\frac{\delta_n^{(1)}}{n} \right )^{1/4} r_n^{(1)}$.
Furthermore, introduce
$$
\delta_n^{(2)}={\delta_1}\frac{\ln (n\delta_n^{(1)})}{\ln n} \text{ and  }r_n^{(2)}
=c_3\frac{(\delta_n^{(1)} n)^{1/4} r_n^{(1)}}{4K_n}.
$$
Let $\mathcal C_1=\mathcal C(\delta_n^{(2)},r_n^{(2)})\cap\mathcal {IC}(\delta_n^{(1)},r_n^{(1)})$.
The inequality~\eqref{eq: boind for incomp vect} and Lemmas~\ref{incomp1} and~\ref{help2} together imply, for any
$0<\tau\le c_2' \left (\frac{\delta_n^{(1)}}{n} \right )^{1/4} r_n^{(1)}$,
\begin{equation}\label{ineq100}
\Pb (\inf_{\mathbf x\in\mathcal C_1}\|\W \mathbf x-\mathbf y\|_2/\|\mathbf x\|_2 \le \tau\sqrt n, \mathcal E_K ) \le \exp\{-c_1'n\ln(n\delta_n^{(1)})\}.
\end{equation}
Note that there exists an absolute constant $\delta_2>0$ such that $\delta_n^{(2)}\ge\delta_2$.
This implies
\begin{equation}\notag
 \mathcal {IC}(\delta_n^{(2)},r_n^{(2)})\subset\mathcal {IC}(\delta_2,r_n^{(2)})=:\mathcal C_2.
\end{equation}
In what follows we shall bound the quantity
\begin{equation}\notag
 \mathcal P:=\Pb (\inf_{\mathbf x\in\mathcal C_2}\|\W \mathbf x\|_2
 \le \tau\sqrt n,\mathcal E_K).
\end{equation}

We reformulate Lemma 4.9 from \cite{GotTikh2010} (Lemma 3.5 in \cite{RudVesh2008})  for our case.
\begin{lemma}\label{invertdist}Let the matrix $\mathbf X$ be as described in Theorem \ref{th:main}.
 Let $\mathbf X_1,\ldots,\mathbf X_n$ denote the columns of $\W$ and let $\mathcal H_k$
denote the span of all column vectors except the $k$th. Then for every $\eta>0$
\begin{align*}
 \Pb (\inf_{\mathbf x\in\mathcal C_2}\|\W \mathbf x\|_2\le \eta ({r_n^{(2)}}/
 \sqrt n)^2, \mathcal E_K)\le\frac1{n\delta_2}\sum_{k=1}^n
\Pb (\text{\rm dist}(\mathbf X_k,\mathcal H_k)<\eta r_n^{(2)}/\sqrt n, \mathcal E_K ).
\end{align*}
\end{lemma}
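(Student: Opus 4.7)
The plan is a standard ``decoupling by averaging'' argument in the spirit of Rudelson and Vershynin: transfer the infimum over the set $\mathcal{C}_2$ of incompressible vectors into a quantitative statement about individual columns of $\W$, then apply Markov's inequality. The starting point is the elementary identity: for each $k \in \{1,\ldots,n\}$ and each unit vector $\mathbf{x}$, writing $\W\mathbf{x} = \sum_{j=1}^{n} x_j \mathbf{X}_j$ and noting that $\sum_{j\ne k} x_j \mathbf{X}_j \in \mathcal{H}_k$, one obtains
\begin{equation*}
\|\W\mathbf{x}\|_2 \ge \dist(x_k \mathbf{X}_k, \mathcal{H}_k) = |x_k|\cdot \dist(\mathbf{X}_k, \mathcal{H}_k).
\end{equation*}

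Next I would invoke the spread characterization of incompressible vectors (Lemma \ref{l,a:incomp vec}, used already in the proof of Lemma \ref{incomp1}): for every $\mathbf{x} \in \mathcal{IC}(\delta_2, r_n^{(2)})$ there is a set $\sigma(\mathbf{x}) \subset \{1,\ldots,n\}$ with $|\sigma(\mathbf{x})| \ge \delta_2 n/2$ on which $|x_k| \ge r_n^{(2)}/\sqrt{2n}$. Suppose now the event on the left-hand side of the claim occurs; then there is some $\mathbf{x}\in\mathcal{C}_2$ with $\|\W\mathbf{x}\|_2 \le 2\eta(r_n^{(2)}/\sqrt{n})^2$ (the factor $2$ covers the non-attainment of the infimum and is harmless), and the displayed identity gives
\begin{equation*}
\dist(\mathbf{X}_k,\mathcal{H}_k) \le \frac{\|\W\mathbf{x}\|_2}{|x_k|} \le 2\sqrt{2}\,\eta\,\frac{r_n^{(2)}}{\sqrt{n}} \qquad \text{for every } k \in \sigma(\mathbf{x}).
\end{equation*}

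Define the random index set
\begin{equation*}
U := \bigl\{k \in \{1,\ldots,n\} \,:\, \dist(\mathbf{X}_k, \mathcal{H}_k) < \eta\, r_n^{(2)}/\sqrt{n}\bigr\}
\end{equation*}
(absolute constants absorbed into $\eta$, or equivalently into $\delta_2$, throughout). The previous step shows that on the event in question, $\sigma(\mathbf{x})\subset U$ for the chosen $\mathbf{x}\in\mathcal{C}_2$, hence $|U|\ge \delta_2 n/2$. Applying Markov's inequality to the non-negative random variable $|U|\cdot\one_{\mathcal{E}_K}$ then yields
\begin{equation*}
\Pb\bigl(|U|\ge \delta_2 n/2,\, \mathcal{E}_K\bigr) \le \frac{2}{n\delta_2}\,\E\bigl[|U|\cdot\one_{\mathcal{E}_K}\bigr] = \frac{2}{n\delta_2}\sum_{k=1}^{n} \Pb\bigl(\dist(\mathbf{X}_k,\mathcal{H}_k) < \eta\, r_n^{(2)}/\sqrt{n},\, \mathcal{E}_K\bigr),
\end{equation*}
which is the claimed bound. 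The argument is mechanical once the columns-vs-vectors duality is in place, so no genuine obstacle arises; the only care needed is matching the quadratic scaling $(r_n^{(2)}/\sqrt{n})^2$ on the left with the linear scaling $r_n^{(2)}/\sqrt{n}$ on the right, which is precisely what division of $\|\W\mathbf{x}\|_2$ by $|x_k|\gtrsim r_n^{(2)}/\sqrt{n}$ produces, and keeping the event $\mathcal{E}_K$ attached throughout so that $|U|\one_{\mathcal{E}_K}$ is a legitimate target for Markov.
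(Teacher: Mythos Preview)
Your proposal is correct and follows essentially the same Rudelson--Vershynin averaging argument as the paper: the identity $\|\W\mathbf{x}\|_2\ge |x_k|\dist(\mathbf{X}_k,\mathcal{H}_k)$, the fact that incompressible vectors have at least $\delta_2 n$ coordinates of size $\gtrsim r_n^{(2)}/\sqrt{n}$, and Markov's inequality on the count of ``bad'' columns. The only cosmetic difference is that the paper argues via a pigeonhole intersection of the set $\sigma_1=\{k:\dist(\mathbf{X}_k,\mathcal{H}_k)\ge\eta r_n^{(2)}/\sqrt n\}$ with $\sigma_2(\mathbf{x})=\{k:|x_k|\ge r_n^{(2)}/\sqrt n\}$, which yields the stated constants exactly, whereas your inclusion $\sigma(\mathbf{x})\subset U$ produces harmless extra factors that you absorb into $\eta$ and $\delta_2$.
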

\begin{proof}{of Lemma \ref{invertdist}.}
 The proof of this lemma is given in \cite{GotTikh2010}. Note that this proof doesn't use the
independence of the entries of the matrix $\mathbf X$.
For the completeness we repeat this proof here. Recall that $\mathbf X_1,\ldots,\mathbf X_n$
denote the column vector of the matrix $\W$.
Writing $\W\mathbf x=\sum_{k=1}^nx_x\mathbf X_k$, we have
\begin{equation}\label{4101}
 \|\W\mathbf x\|_2\ge \max_{k=1,\ldots,n}\, \text{\rm dist}(x_k\mathbf X_k,\mathcal H_k)
 =\max_{k=1,\ldots,n}\,|x_k|
\text{\rm dist}(\mathbf X_k,\mathcal H_k)
\end{equation}
 Put
\begin{equation}\notag
 p_k=\Pb (\text{\rm dist}(\mathbf X_k,\mathcal H_k)<\eta r_n^{(2)}/\sqrt n ).
\end{equation}
Then
\begin{equation}\notag
 \E|\{k:\text{\rm dist}(\mathbf X_k,\mathcal H_k)<\eta r_n^{(2)}\}|=\sum_{k=1}^np_k.
\end{equation}
Denote by $\mathcal E$ the event that the set $\sigma_1:=\{k:\text{\rm dist}
(\mathbf X_k,\mathcal H_k)\ge \eta r_n^{(2)}/\sqrt n\}$ contains
more than $(1-\delta_2)n$ elements. Then by Chebyshev inequality
\begin{equation}\notag
 \Pb (\mathcal E^{c} )\le \frac1{n\delta_2}\sum_{k=1}^np_k
\end{equation}
 On the other hand, for every incompressible vector $\mathbf x$, the set
 $\sigma_2(\mathbf x)=\{k:\,|x_k|\ge  r_n^{(2)}/\sqrt n\}$ contains at least
$\delta_2n$ elements. (Otherwise, since $\|P_{\sigma_2(\mathbf x)^{c}}\mathbf x\|
\le  r_n^{(2)}$ we have $\|\mathbf x-\mathbf y\|\le
 r_n^{(2)}$ for the sparse vector $\mathbf y=P_{\sigma_2(\mathbf x)^{c}}\mathbf x$).

Assume that the event $\mathcal E$ occurs. Fix any $(\delta_2,r_n^{(2)})$-incompressible
vector $\mathbf x$. Then
$|\sigma_1|+|\sigma_2(\mathbf x)|>(1-\delta_2)n+\delta_2n>n$,  so the sets $\sigma_1$ and
$\sigma_2(\mathbf x)$
have non-empty intersection. Let $k\in\sigma_1\cap\sigma_2(\mathbf x)$.
Then by (\ref{4101} and definitions of
the sets $\sigma_1$ and $\sigma_2(\mathbf x)$, we have
\begin{equation}\notag
 \|\W \mathbf x\|_2\ge|x_k|\text{\rm dist}(\mathbf X_k,\mathcal H_k)\ge
 \eta (r_n^{(2)}/\sqrt n)^2
\end{equation}
 Summarizing, we have shown that
\begin{equation}\notag
\Pb (\inf_{\mathbf x\in\mathcal C_3}\|\W \mathbf x\|_2\le
\eta ({r_n^{(2)}}/\sqrt n)^2 )\le \Pb (\mathcal E^c )\le
\frac1{n\delta_2}\sum_{k=1}^np_k.
\end{equation}
This completes the proof of Lemma \ref{invertdist}.
\end{proof}

To conclude  the proof of Theorem \ref{th:main} it remains to bound the quantity
\begin{equation}\notag
 \gamma_k:=\Pb (\text{\rm dist}(\mathbf X_k, \mathcal H_k)<\eta r_n^{(2)}/\sqrt n, \mathcal E_K ).
\end{equation}
We shall use some modification of Vershynin's approach. We reformulate  the statement of Proposition 5.1 in \cite{Veshyn2011} for  matrices
with correlated entries here.
\begin{statement}\label{Versh}
 Let $\mathbf A=(a_{jk})$ be an arbitrary $n\times n$ matrix. Let $\mathbf A_1$ denote the
 first column of $\mathbf A$ and $\mathcal H_1$
denote the span of the other columns. Furthermore, let $\mathbf B$ denote the
$(n-1)\times(n-1)$ minor of $\mathbf A$ obtained by
removing the first
column and the first row from $\mathbf A$, and let $\mathbf u\in \mathbb R^{n-1}$ and
$\mathbf v^T\in\mathbb R^{(n-1)}$
denote the first column and the first row of $\mathbf A$ respectively with first entry removed.
Then
\begin{equation}
\dist(\mathbf A_1,\mathcal H_1) \geq \frac{|(\B^{-T} \mathbf v , \mathbf u) - a_{11}|}{\sqrt{1+||\B^{-T} \mathbf v||_2^2}}
\end{equation}
\end{statement}
\begin{proof}[Proof of Statement~\ref{Versh}].
The proof of this claim is given in \cite{Veshyn2011}. We repeat this proof here.
Represent the matrix $\A$ in the form
\begin{equation}\label{eq:matrix}
\begin{pmatrix}
a_{11}  & \mathbf{v}^T\\
\mathbf{u}   &  \B \\
\end{pmatrix}
\end{equation}
Let $\mathbf h$ be any unit vector orthogonal to $A_2, ... , A_{n}$. It follows that
$$
\mathbf 0 = \begin{pmatrix}
  \mathbf v^T \\
  \B
\end{pmatrix}^T \mathbf h = h_1 \mathbf v + \B^T \mathbf g,
$$
where $\mathbf h = (h_1, \mathbf g)$, and
$$
\mathbf g = - h_1 \B^{-T} \mathbf v.
$$
From the definition of $\mathbf h$
$$
1 = ||\mathbf h||_2^2 = |h_1|^2 + ||\mathbf g||_2^2 = |h_1|^2  + |h_1|^2 ||\B^{-T} \mathbf v||_2^2
$$
Using this equations we get
$$
\dist(A_1, H) \geq |(A_1,\mathbf h)| = \frac{|a_{11} - (\B^{-T} \mathbf v , \mathbf u)|}{\sqrt{1+||\B^{-T} \mathbf v||_2^2}}
$$
Thus Proposition \ref{Versh} is proved.
\end{proof}
In what follows we set $\A: = \W$. Next, we prove the following
\begin{lemma}\label{key1}
Let the matrix $\W$ denote a matrix  as described in Theorem \ref{th:main}. Let $\mathbf u$, $\mathbf v$
and $\mathbf B$ be determined by~\eqref{eq:matrix}. Then
\begin{equation}\notag
 \sup_{a\in\mathbb R}\Pr\left\{\frac{|({\mathbf B^{-T}} \mathbf v,\mathbf u)-a|}
 {\sqrt{1+\|{\mathbf B^{-T}}\mathbf v\|_2^2}}\le \varepsilon,
\text{ and } \|\mathbf B\|_2\le K_n\right\}\le Cn^{-A}
\end{equation}
with $0<\varepsilon\le n^{-B}$ for some constants $A>0$ and $B>0$.
\end{lemma}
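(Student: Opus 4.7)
The plan is to reduce via Statement~\ref{Versh} to a small-ball estimate for a linear form in the first column of $\mathbf W$, and then analyse the resulting unit normal vector by a compressible/incompressible dichotomy, paralleling the strategy of Lemmas~\ref{help1}--\ref{incomp1}. Set
\begin{equation*}
\mathbf h := \frac{1}{\sqrt{1+\|\mathbf B^{-T}\mathbf v\|_2^2}}\,\bigl(1,\,-(\mathbf B^{-T}\mathbf v)^T\bigr)^T \in \mathcal S^{(n-1)};
\end{equation*}
then $\mathbf h$ is a unit vector depending only on $\mathbf B$ and $\mathbf v$, and the quantity inside the probability equals $\langle\mathbf h,\mathbf X_1\rangle - \tilde a$, where $\mathbf X_1=(a_{11},\mathbf u^T)^T$ is the first column of $\mathbf W$ and $\tilde a$ is a shift depending on $a,\mathbf B,\mathbf v$. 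Taking $\sup_a$ reduces the problem to bounding, uniformly in shifts, the concentration function $Q(\langle\mathbf h,\mathbf X_1\rangle,\varepsilon)$ on the event $\{\|\mathbf B\|_2\le K_n\}$.

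The natural dichotomy is on whether $\mathbf h\in\mathcal{IC}(\delta_n,r_n)$ or $\mathbf h\in\mathcal C(\delta_n,r_n)$ for parameters $\delta_n,r_n$ chosen as in Lemma~\ref{incomp1}. On the incompressible event, Lemma~\ref{l,a:incomp vec} yields a set $\sigma(\mathbf h)\subset\{1,\ldots,n\}$ of size at least $\tfrac12\delta_n n$ on which $|h_k|\gtrsim r_n/\sqrt n$. Condition on $(\mathbf B,\mathbf v)$: the coordinates of $\mathbf u$ are then conditionally independent (since the pairs $(u_k,v_k)$ are independent across $k$) with conditional variance bounded below by $1-\rho^2$. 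A Rogozin-type concentration function inequality applied to $\sum_{k\in\sigma(\mathbf h)\setminus\{1\}} h_k u_k$ yields a bound of order $\varepsilon/(r_n\sqrt{\delta_n})+n^{-\alpha}$, which is at most $n^{-A}$ once $\varepsilon\le n^{-B}$ with $B$ sufficiently large depending on $K,Q$.

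On the compressible event one must prove $\Pb(\mathbf h\in\mathcal C(\delta_n,r_n),\,\|\mathbf B\|_2\le K_n)\le Cn^{-A}$. The key reformulation is that if $\mathbf h$ lies within distance $r_n$ of a unit vector supported on some $J\subset\{1,\ldots,n\}$ with $|J|\le\delta_n n$, then $\mathbf B^{-T}\mathbf v$ lies within distance $O(r_n)$ of the coordinate subspace indexed by $J\setminus\{1\}$, and hence $\mathbf v=\mathbf B^T(\mathbf B^{-T}\mathbf v)$ lies within distance $O(r_n K_n)$ of a subspace of dimension at most $\delta_n n$. A union bound over the $\binom{n}{\delta_n n}$ possible supports $J$, combined with a small-ball estimate for $\mathbf v$ against a fixed such subspace (its coordinates being independent with unit variance under \Cond~and \UI), yields the required bound.

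The principal obstacle is this compressibility step: since $\mathbf B^{-T}$ may have arbitrarily small singular values, the direction of $\mathbf B^{-T}\mathbf v$ cannot be controlled by naive norm estimates on $\mathbf B^{-T}$. The remedy is the inversion trick $\mathbf v=\mathbf B^T(\mathbf B^{-T}\mathbf v)$, converting compressibility of $\mathbf h$ into the event that $\mathbf v$ lies near a low-dimensional subspace determined by $\mathbf B$ together with a sparse support; this event is then handled by the same net-and-union-bound arguments used in the proofs of Lemmas~\ref{help1} and~\ref{incomp1}. In the degenerate case $|\rho|=1$, where the conditional variance of $u_k$ given $v_k$ vanishes, the argument must be modified separately; in this regime $\mathbf W$ is essentially symmetric and the statement is covered by~\cite{Ngu2010}.
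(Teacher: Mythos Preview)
Your incompressible step contains a genuine gap. After conditioning on $(\mathbf B,\mathbf v)$ you assert that the conditional variance of $u_k$ given $v_k$ is bounded below by $1-\rho^2$; this is false under \Cond\ and \UI. Those hypotheses fix only the marginal second moments and the correlation $\E u_kv_k=\rho$, so nothing prevents $u_k$ from being a deterministic function of $v_k$ (take $v_k$ standard Gaussian and $u_k=\rho v_k+\sqrt{1-\rho^2}\,H_2(v_k)/\sqrt 2$, say), in which case $\operatorname{Var}(u_k\mid v_k)\equiv 0$ and any Rogozin/Esseen bound for $\sum_k h_k u_k$ conditioned on $\mathbf v$ collapses. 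In fact the law of total variance gives $\E[\operatorname{Var}(u_k\mid v_k)]\le 1-\rho^2$, the opposite inequality to the one you need. Your final caveat about $|\rho|=1$ shows you sensed the issue, but the degeneracy is not confined to that extreme.

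The paper circumvents exactly this obstruction by \emph{not} conditioning on $\mathbf v$. It rewrites $(\mathbf B^{-T}\mathbf v,\mathbf u)=\tfrac12(\mathbf Q\mathbf w,\mathbf w)$ with $\mathbf w=(\mathbf u,\mathbf v)^T$, applies a decoupling inequality (Statement~\ref{l:q.form decoupling}) on a random coordinate split $\mathcal J\cup\mathcal J^c$, and reduces to a linear form $\sum_{i\in\mathcal J}(a_i u_i+b_i v_i)$ whose coefficients depend only on $P_{\mathcal J^c}(\mathbf w-\mathbf w')$ and are therefore independent of the summands. Now each summand $a_iu_i+b_iv_i$ has unconditional variance $a_i^2+2\rho a_ib_i+b_i^2\ge(1-|\rho|)(a_i^2+b_i^2)$, which is what feeds into Lemma~\ref{ap:l.concentration}. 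Your compressible branch is closer in spirit to what is actually done (it is essentially Statement~\ref{l:incomp of inverse}), though the claim that $\mathbf v$ lies within $O(r_nK_n)$ of a low-dimensional subspace ignores the factor $\sqrt{1+\|\mathbf B^{-T}\mathbf v\|_2^2}$ coming from the normalisation of $\mathbf h$; the paper handles this via the norm comparisons in Statement~\ref{l:norm est}.
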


To get this bound we need several statements. First we introduce the matrix and vector
\begin{equation}
\Q = \begin{pmatrix}
\OO_{n-1}  & \B^{-T}\\
\B^{-1} &  \OO_{n-1}\\
\end{pmatrix}
\quad
\mathbf w = \begin{pmatrix}
\mathbf u  \\
\mathbf v
\end{pmatrix},
\end{equation}
where $\OO_{n-1}$ is $(n - 1) \times (n - 1)$ matrix with zero entries. Using the definition of $\Q$ we may write
$$
(\B^{-T} \mathbf v, \mathbf u) = \frac{1}{2} (\Q \mathbf w, \mathbf w).
$$
Introduce vectors
\begin{equation}
\mathbf w' =\begin{pmatrix}
   \mathbf u'\\
   \mathbf v'
\end{pmatrix}
\quad
\mathbf z =\begin{pmatrix}
   \mathbf u\\
   \mathbf v'
\end{pmatrix},
\end{equation}
where $\mathbf u', \mathbf v'$ are independent copies of $\mathbf u, \mathbf v$ respectively. We need the following statement.

\begin{statement}\label{l:q.form decoupling}
$$
\sup_{v \in \R} \Pb_{\mathbf w} \left (|(\Q \mathbf w, \mathbf w) - v| \le 2 \varepsilon \right ) \le \Pb_{\mathbf w, \mathbf w'} \left (|(\Q \OP_{J^{c}}(\mathbf w- \mathbf w'), \OP_J \mathbf w) - u| \le 2 \varepsilon \right ),
$$
where $u$ doesn't depend on $\OP_J \mathbf w = (\OP_J \mathbf u, \OP_J \mathbf v)^T$.
\end{statement}

\begin{remark}
 This Lemma was stated and proved in \cite[Statement~2]{Nau2013b}. We repeat here the proof.
\end{remark}

\begin{proof}
Let us fix $v$ and denote
$$
p:=\Pb \left (|(\Q\mathbf w, \mathbf w) - v| \le  2 \varepsilon \right ).
$$
We can decompose the set $[n]$ into the union $[n] = J \cup J^c$. We can take $\mathbf u_1 = \OP_J \mathbf u, \mathbf u_2 = \OP_{J^c} \mathbf u, \mathbf v_1 = \OP_J \mathbf v$ and $\mathbf v_2 = \OP_{J^c} \mathbf v$.
In what follows we shall use a simple ``decoupling'' inequality. Let $X,Y$ be independent r.v.'s and denote by $X'$ an independent copy of $X$, which is also independent from $Y$.
Let $\mathcal E(X,Y)$ be an event depending on $X$ and $Y$. Then
\begin{equation}\label{sym}
{\Pb}^2\{\mathcal E(X,Y)\}\le {\Pb}\{\mathcal E(X,Y)\cap\mathcal E(X',Y)\}.
\end{equation}
This inequality was stated in \cite{Veshyn2011}, Lemma~8.5, and \cite[Lemma~14]{Costello2013}.
It originated in~\cite[Lemma~3.37]{Gotze1979}. The proof is very simple. The claim follows from the inequality
\begin{align*}
\Pb^2 \{\mathcal E(X,Y)\}&=(\E\Pb\{\mathcal E(X,Y)|Y\})^2 \le \E {\Pb}^2\{\mathcal E(X,Y)|Y\}\\&=
\E\Pb\{\mathcal E(X,Y)\cap\mathcal E(X',Y)|Y\}= {\Pb}\{\mathcal E(X,Y)\cap\mathcal E(X',Y)\}.
\end{align*}
Applying inequality \eqref{sym}, we get
\begin{align} \label{eq:p}
&p^2 \le \Pb \left (|(\Q \mathbf w, \mathbf w) - v| \le  2 \varepsilon , |(\Q   \mathbf z,  \mathbf z) - v| \le  2 \varepsilon \right )\\
&\le \Pb \left (|(\Q \mathbf w, \mathbf w)  - (\Q \mathbf z, \mathbf z)| \le  4 \varepsilon \right ) \nonumber .
\end{align}
Let us rewrite $\B^{-T}$ in the block form
$$
\B^{-T} = \begin{pmatrix}
\EE  & \F\\
\G &  \HH\\
\end{pmatrix}.
$$
We get
\begin{align*}
(\Q \mathbf w, \mathbf w) &= (\EE \mathbf v_1 , \mathbf u_1) + (\F \mathbf v_2, \mathbf u_1) + (\G \mathbf v_1, \mathbf u_2) + (\HH \mathbf v_2, \mathbf u_2)\\& +
 (\EE^T \mathbf u_1, \mathbf v_1) + (\G^T \mathbf u_2, \mathbf v_1) + (\F^T \mathbf u_1, \mathbf v_2) + (\HH^T \mathbf u_2, \mathbf v_2)\\
(\Q \mathbf z, \mathbf z) &= (\EE \mathbf v_1, \mathbf u_1) + (\F \mathbf v_2', \mathbf u_1) + (\G \mathbf v_1, \mathbf u_2') + (\HH \mathbf v_2', \mathbf u_2')  \\& +
(\EE^T \mathbf u_1, \mathbf v_1) + (\G^T \mathbf u_2', \mathbf v_1) + (\F^T \mathbf u_1, \mathbf v_2') + (\HH^T \mathbf u_2', \mathbf v_2')
\end{align*}
and
\begin{align}\label{eq:q.f.1}
(\Q \mathbf w, \mathbf w) - (\Q \mathbf z,\mathbf z) &= 2(\F (\mathbf v_2 - \mathbf v_2'), \mathbf u_1) + 2 (\G^T (\mathbf u_2 - \mathbf u_2'), \mathbf v_1)\\& +
 2 (\HH \mathbf v_2, \mathbf u_2) - 2(\HH \mathbf v_2', \mathbf u_2') \nonumber .
\end{align}
The last two terms in~\eqref{eq:q.f.1} depend only on $\mathbf u_2, \mathbf u_2',\mathbf v_2,\mathbf v_2'$ and we conclude that
$$
p_1^2 \le \Pb \left ( |(\Q P_{J^c}(\mathbf w - \mathbf w'), P_J \mathbf w ) - u| \le 2 \varepsilon \right),
$$
where $u = u(\mathbf u_2,\mathbf v_2,\mathbf u_2',\mathbf v_2',\HH)$.
\end{proof}

\begin{statement} \label{l:incomp of inverse}
For all $\mathbf u \in \R^{n}$ there exists a constant $c>0$ such that
$$
\Pb\left( \frac{\B^{-T} \mathbf u}{||\B^{-T}\mathbf u||_2} \notin \mathcal C_2 \text{ and } ||\B|| \le K_n \right) \le e^{-c n }.
$$
\end{statement}
\begin{proof}
Note that
\begin{equation}\notag
 \mathcal S^{(n-1)}=\mathcal C_0\cup\mathcal C_1\cup\mathcal C_2.
\end{equation}
Let $ \mathbf x = \B^{-T} \mathbf u$. It is easy to see that
\begin{align*}
\left \{\frac{\B^{-T} \mathbf u}{||\B^{-T}  \mathbf u||_2} \notin \mathcal C_2 \right \} \subseteq \left \{\exists  \mathbf x: \frac{ \mathbf x}{|| \mathbf x||_2} \in \mathcal C_0 \cup \mathcal C_1 \text{ and } \B^T  \mathbf x =  \mathbf u \right \}
\end{align*}
This implies, for $c>0$,
\begin{align*}
&\Pb \left( \frac{\B^{-T} \mathbf u}{||\B^{-T}\mathbf u||_2} \notin \mathcal C_2 \text{ and } ||\B|| \le K_n \right) \\
&\le \sum_{i=0}^1 \Pb\left( \inf_{\frac{\mathbf x}{||\mathbf x||_2} \in \mathcal C_i} || \B^T \mathbf x - \mathbf u||_2 / ||\mathbf x||_2 \le \tau \text{ and } ||\B|| \le K_n \right )
\end{align*}
Now choose $\tau$ small enough to apply Lemmas~\ref{help1} and~\ref{help2}.
\end{proof}

\begin{remark}
It is not very difficult to show that for all $\mathbf w \in \R^{2n}$
$$
\Pb\left( \frac{\Q \mathbf w}{||\Q \mathbf w||_2} \notin \mathcal C_2 \text{ and } ||\B|| \le K_n \right) \le e^{-c n }.
$$
To prove that one should observe that
$$
\Q^{-1} = \begin{pmatrix}
\OO_{n-1}  & \B\\
\B^{T} &  \OO_{n-1}\\
\end{pmatrix}
$$
and repeat the proof of Lemmas~\ref{local}--\ref{incomp1} for the matrix $\W$ replaced by $\Q^{-1}$.
\end{remark}
\begin{statement} \label{l:norm est}
Let $\W$ satisfy condition $\Cond$ and consider a matrix $\B$  and a vector $\mathbf v$  determined by the decomposition~\eqref{eq:matrix}.
Assume that $||\B|| \le  K_n$. Let $\mathbf x=(X_1,\ldots, X_n)$, where
$X_1\ldots, X_n$ are independent r.v.'s with $\E X_j=0$ and $\E X_j^2=1$. We shall assume
that the r.v.'s $X_j$ are independent from the matrix $\B$. Then with probability at least $1 - e^{-c_0 n}$ the matrix $\B$ has the following properties:
\begin{itemize}
\item[a)] $\Pb_{\mathbf v}(\sqrt{1+||\B^{-T} \mathbf v||_2^2} \leq \varepsilon^{-1/2} ||\B^{-T}||_2) \geq 1 - n^{-1/4}$,\\
\item[b)] $\Pb_{\mathbf x}(||\B^{-T} \mathbf x||_2 \geq \varepsilon ||\B^{-1}||_{2}) \geq 1 - c n^{-1/4}$,
\end{itemize}
where $\varepsilon < \min \left (\frac{n^{-1/4}}{2 K_n^2}, c'\frac{r_n^{(2)}}{n^{1/4}}\right )$.
\end{statement}

\begin{proof}
Let $\{\mathbf e_k\}_{k=1}^n$ be a standard basis in $\R^n$. For all $1 \le k \le n$ define vectors by
$$
\mathbf a_k := \frac{\B^{-1} \mathbf e_k}{||\B^{-1} \mathbf e_k||}.
$$
By Statement~\ref{l:incomp of inverse} the vector $\mathbf a_k$ is incompressible with probability $1 - e^{-c_0 n}$. Fix a  matrix $\B$ with this property.\\
a) By Chebyshev inequality
$$
\Pb_{\mathbf v}(\sqrt{1+||\B^{-T} \mathbf v||_2^2} \geq \varepsilon^{-1/2} ||\B^{-T}||_2) \le 2\varepsilon K_n^2.
$$
We may choose $\varepsilon < \frac{n^{-1/4}}{2 K_n^2}$.\\
b) Note that
$$
||\B^{-T}\mathbf x ||_2^2
=\sum_{k=1}^n\|\B^{-1}\mathbf e_k\|_2^2 (\mathbf a_k,\mathbf x)^2.
$$
We may conclude from Lemma~\ref{ap:l_conc_funct_2} that
$$
\Pb(|(\mathbf a_k, \mathbf x)| \le \varepsilon) \le c n^{-1/4}.
$$
for all $\varepsilon < c' n^{-1/4} r_n^{(2)}$. Applying now~\cite[Lemma 8.3]{Veshyn2011} with
$p_k=\frac{\|\B^{-1}\mathbf e_k\|_2^2}{\|\B^{-1}\|_2^2}$, we get
\begin{align*}
&\Pb_{\mathbf x}(||\B^{-T} \mathbf x||_2 \le \varepsilon ||\B^{-1}||_{2} ) = \Pb_{\mathbf x}(||\B^{-T} \mathbf x||_2^2 \le \varepsilon^2 ||\B^{-1}||_{2}^2 ) \\
&= \Pb_{\mathbf x}(\sum_{k=1}^n ||\B^{-1} \mathbf e_k||_2^2 (\mathbf a_k, \mathbf x)^2 \le \varepsilon^2 ||\B^{-1}||_{2}^2 ) =\Pb_{\mathbf x}(\sum_{k=1}^n p_k (\mathbf a_k, \mathbf x)^2 \le \varepsilon^2  )\\
&\le 2\sum_{k=1}^n p_k \Pb_{\mathbf x}( |(\mathbf a_k, \mathbf x)| \le \sqrt 2 \varepsilon) \le c n^{-1/4}.
\end{align*}
\end{proof}

\begin{proof}[Proof of Lemma~\ref{key1} ]
Let $\xi_1, ... ,\xi_n$ be i.i.d. Bernoulli random variables  with $\E \xi_i = c_{0}/2$, where $c_{0}$ is some constant which will be chosen later.
Define the set $\mathcal J: =\{i: \xi_i = 0 \}$ and the event $\mathcal E_0 : = \{|\mathcal J^c| \le c_0 n \}$. From a large deviation inequality we may conclude that
$\Pb(\mathcal E_0) \geq 1 - 2 \exp(-c_0^2 n/2)$. Introduce the event
$$
\mathcal E_1:= \{\varepsilon_0^{1/2} \sqrt {1 + ||\B^{-T} \mathbf v||_2^2} \le ||\B^{-1}||_{2} \le \varepsilon_0^{-1}||\Q \OP_{\mathcal J^c} (\mathbf w - \mathbf w')||_2 \},
$$
where $\varepsilon_0$ will be chosen later.

From Statement~\ref{l:norm est} we may conclude that
$$
\Pb_{\B, \mathbf w, \mathbf w', \mathcal J} (\mathcal E_1 \cup ||\B|| \geq K_n) \geq 1 - C' n^{-1/4} - 2 e^{-c' n}.
$$
Consider the random vector
$$
\mathbf w_0 = \frac{1}{||\Q \OP_{\mathcal J^c} (\mathbf w - \mathbf w')||_2}\begin{pmatrix}
  \B^{-T} \OP_{\mathcal J^c} (\mathbf v - \mathbf v') \\
  \B^{-1} \OP_{\mathcal J^c} (\mathbf u - \mathbf u')
\end{pmatrix} =
\begin{pmatrix}
   \mathbf a\\
   \mathbf b
\end{pmatrix}.
$$
By the remark after Statement~\ref{l:incomp of inverse} it follows that the event $\mathcal E_2:=\{\mathbf w_0 \in \incomp(\delta,r_n^{(2)}) \}$ holds with probability
$$
\Pb_{\B}(\mathcal E_2 \cup ||\B|| \geq K_n | \mathbf w, \mathbf w', \mathcal J) \geq 1 -  2 \exp(-c'' n).
$$
Combining these probabilities we have
\begin{align*}
& \Pb_{\B, \mathbf w, \mathbf w', \mathcal J} (\mathcal E_0, \mathcal E_1, \mathcal E_2 \cup ||\B|| \geq K_n) \\
& \geq 1 -  2 e^{-c_{0}^2 n/2 } - C'n^{-1/4} - 2 e^{-c' n} - 2 e^{-c'' n}: = 1 - p_0.
\end{align*}
We may fix $\mathcal J$ that satisfies $|\mathcal J^c| \le c_0 n$  and
$$
\Pb_{\B,\mathbf w, \mathbf w'} (\mathcal E_1, \mathcal E_2 \cup ||\B|| \geq K_n ) \geq 1 - p_0.
$$
By Fubini's theorem $\B$ has the following property with probability at least $1 - \sqrt{p_0}$
$$
\Pb_{\mathbf w, \mathbf w'} (\mathcal E_1, \mathcal E_2 \cup ||\B|| \geq K_n | \B ) \geq 1 - \sqrt{p_0}.
$$
The event $\{||\B|| \geq K_n\}$ depends only on $\B$. We may conclude that the random matrix $\B$ has the following property with probability at least $1 - \sqrt{p_0}$: either $||\B|| \geq K_n$, or
\begin{align} \label{eq:B prop}
||\B|| \le K_n \text{ and } \Pb_{\mathbf w, \mathbf w'} (\mathcal E_1, \mathcal E_2| \B) \geq 1 - \sqrt p_0
\end{align}
The event we are interested in is
$$
\Omega_0 := \left ( \frac{|(\Q \mathbf w, \mathbf w) - u|}{\sqrt{1 + ||\B^{-T} \mathbf v||_2^2}} \le 2 \varepsilon \right ).
$$
We need to estimate the probability
$$
\Pb_{\B, \mathbf w} (\Omega_0 \cap ||\B|| \le K_n) \le \Pb_{\B, \mathbf w} (\Omega_0 \cap \text{ \eqref{eq:B prop} holds} ) + \Pb_{\B, \mathbf w} (||\B|| \geq K_n \cap \text{ \eqref{eq:B prop} fails} ).
$$
The last term is bounded by $\sqrt{p_0}$.
$$
\Pb_{\B, \mathbf w} (\Omega_0 \cap ||\B|| \le K_n) \le \sup_{\B \text{ satisfies~\eqref{eq:B prop}}} \Pb_{\mathbf w} (\Omega_0| \B ) + \sqrt{p_0}.
$$
We conclude that
\begin{align*}
\Pb_{\B, \mathbf w} (\Omega_0 \cap ||\B|| \le K_n) \le \sup_{\B \text{ satisfies~\eqref{eq:B prop}}} \Pb_{\mathbf w, \mathbf w'} (\Omega_0, \mathcal E_1| \B ) + 2\sqrt{p_0}.
\end{align*}
Let us fix $\B$ that satisfies~\eqref{eq:B prop} and denote $p_1: = \Pb_{\mathbf w, \mathbf w'} (\Omega_0, \mathcal E_1| \B )$. By Statement~\ref{l:q.form decoupling} and
the first inequality in $\mathcal E_1$ we have
\begin{align*}
p_1^2 \le \Pb_{\mathbf w, \mathbf w'} \left ( \underbrace{|(\Q \OP_{\mathcal J^c} (\mathbf w - \mathbf w'), \OP_{\mathcal J} \mathbf w) - v| \le \frac{2\varepsilon}{\sqrt{\varepsilon_0} } ||\B^{-1}||_{2}}_{\Omega_1} \right )
\end{align*}
and
\begin{align*}
\Pb_{\mathbf w, \mathbf w'} ( \Omega_1 ) \le \Pb_{\mathbf w, \mathbf w'} ( \Omega_1, \mathcal E_1, \mathcal E_2) + \sqrt{p_0}.
\end{align*}
Furthermore,
$$
p_1^2 \le \Pb_{\mathbf w, \mathbf w'} ( |(\mathbf w_0, \OP_{\mathcal J} \mathbf w) - v| \le 2\varepsilon_0^{-3/2} \varepsilon, \mathcal E_2 ) + \sqrt{p_0}.
$$
By definition the random vector $\mathbf w_0$ is determined by the random vector $\OP_{\mathcal J^c}(\mathbf w - \mathbf w')$, which is independent of
the random vector $\OP_{\mathcal J} \mathbf w$. Fix the vector $\OP_{\mathcal J^c}(\mathbf w - \mathbf w')$, obtaining
$$
p_1^2 \le \sup_{\substack {\mathbf w_0 \in \incomp(\delta,r_n^{(2)})\\ w \in \R}} \Pb_{\OP_{\mathcal J} \mathbf w} \left ( |(\mathbf w_0, \OP_{\mathcal J} \mathbf w) - w| \le 2\varepsilon_0^{-3/2} \varepsilon \right ) + \sqrt{p_0}.
$$
Let us fix a vector $\mathbf w_0$ and a number $w$. We may rewrite
\begin{equation}\label{eq:sum}
(\mathbf w_0, P_\mathcal{J} \mathbf w)  = \sum_{i \in \mathcal J} ( a_i X_i + b_i Y_i),
\end{equation}
where $||\mathbf a||_2^2 + ||\mathbf b||_2^2 = 1$. For an arbitrary set $I$ we introduce the notation $S_I := \sum_{i \in I} (a_i X_i + b_i Y_i)$. Denote the concentration function of $S_I$ by
\begin{equation}
 Q(S_I,\lambda)=\sup_{a\in\mathbb R}\Pb (|S_I-a|\le \lambda ).
\end{equation}
Our aim is to estimate $Q(S_\mathcal{J},\widetilde\varepsilon)$. For this we would like to apply Lemma~~\ref{ap:l.concentration}, but we can't do it directly. We first need to get appropriate estimates
for the coefficients $a_i, b_i, i \in \mathcal J$.

From Lemma~\ref{l,a:incomp vec} we know that at least $n\delta$ coordinates of vector $w_0 \in \incomp(\delta,r_n^{(2)})$ satisfy
$$
\frac{r_n^{(2)}}{2\sqrt{n}} \le |w_{0i}| \le \frac{1}{\sqrt{\delta n}}.
$$
Denote the spread set of the vectors $\mathbf a$ and $\mathbf b$ by $\sigma(\mathbf a)$ and $\sigma(\mathbf b)$ respectively. It is easy to see that $\max(\sigma(\mathbf a), \sigma(\mathbf b)) \geq \delta n / 2$. Without loss of generality assume that $\sigma(\mathbf a) \geq \delta n /2$. Let us denote by $\mathcal J_0: = \mathcal J \cap \sigma(\mathbf a)$. We may take $c_0 = \delta /4$ and conclude by construction of $\mathcal J$ that $|\mathcal J_0| \geq [\delta n/4]$.
By Lemma~\ref{l,a:reduction} we get $Q(S_\mathcal{J},\widetilde\varepsilon) \le Q(S_{\mathcal{J}_0},\widetilde\varepsilon)$ . The only information we know about the vector $b$ is that
$$
0 \le |b_i| \le 1 \text{ for all } i \in \mathcal J_0.
$$
But from the inequality $||b||_2^2 \le 1$ we may conclude that we can restrict the sum $S_{\mathcal{J}_0}$ on the set
$$
\widetilde{\mathcal{J}}_0:= \left \{i \in \mathcal J_0: |b_i| \le \frac{c}{n^{1/4}} \right \}
$$
and $|\widetilde{\mathcal{J}}_0| \geq c n$. Now we split the set $\widetilde{\mathcal{J}}_0$ as a union of the disjoint sets $J_l$:
\begin{align*}
&J_l = \left \{i \in \widetilde{\mathcal{J}}_0: \frac{2^{l-1} r_n^{(2)}}{2\sqrt{n}} \le |a_i| \le \frac{2^l r_n^{(2)}}{2\sqrt{n}} \right \}, l = 1, ..., L,\\
&J_{L+1} = \left \{i \in \widetilde{\mathcal{J}}_0: \frac{2^{L} r_n^{(2)}}{2\sqrt{n}} \le |a_i| \le \frac{1}{\sqrt{\delta n}} \right \},
\end{align*}
where $L = [c(Q) \ln n]$. It follows from the Dirichlet principle  that there exists $l_0, 1 \le l_0 \le L+1$, such that $|J_{l_o}| \geq c n\ln^{-1} n$. Let's set $\mathcal{I}:=J_{l_0}$.
Again by Lemma~\ref{l,a:reduction} we may write  $Q(S_{\mathcal{J}_0}, \widetilde\varepsilon) \le Q(S_{\mathcal{I}}, \widetilde\varepsilon)$.
Set
$$
\eta_n = \frac{2\sqrt{n}}{2^{l_0-1} r_n^{(2)}}.
$$
It is easy to see that $\eta_n \geq c \sqrt n$.

Introduce the notations $\widetilde a_i:= \eta_n a_i$ and $\widetilde b_i:= \eta_n b_i$. It follows that
$$
1 \le |\widetilde a_i| \le 2, |\widetilde b_i| \le \eta_n n^{-1/4}.
$$
Let $\widetilde S_I = \sum_{i \in I} (\widetilde a_i X_i + \widetilde b_i Y_i)$, where $I$ is an arbitrary set.
We decompose the set $\mathcal{I}$ into the sum of two sets $\mathcal{I} = \mathcal{I}_{1} \cup \mathcal{I}_2$, where
$$
\mathcal{I}_1: = \{i \in \mathcal{I}: |\widetilde b_i| \le 2\} \text{ and } \mathcal{I}_2: = \{i \in \mathcal{I}: 2<|\widetilde b_i| \le \eta_n n^{-1/4} \}.
$$
We have $\max(|\mathcal{I}_1|, |\mathcal{I}_2|) \geq c n \ln^{-1} n /2$.
From the properties of  concentration functions it follows
$$
Q(\widetilde S_{\mathcal{I}}, \varepsilon \eta_n) \le \min(Q(\widetilde S_{\mathcal{I}_1}, \widetilde \varepsilon \eta_n), Q(\widetilde S_{\mathcal{I}_2}, \widetilde \varepsilon \eta_n)).
$$
To finish the proof of the statement we shall consider two cases.

1) Suppose that  $|\mathcal{I}_1| \geq c n \ln^{-1} n /2$. Denote by $\sigma^2 = \E \widetilde S_{\mathcal{I}_1}^2$.
Elementary calculations show that on the set $\mathcal{I}_1$ we have $\sigma^2 \geq (1-\rho^2)|\mathcal{I}_1|$. We may apply Lemma~\ref{ap:l.concentration}
$$
Q(\widetilde S_{\mathcal{I}_1},\widetilde \varepsilon \eta_n) \le \frac{\sqrt {\widetilde \varepsilon \eta_n}}{\sqrt{|\mathcal{I}_1|}(2(1 - \rho^2) - 8 \max_k \E Z_k^2 \mathbb I{\{|Z_k| \geq \widetilde \varepsilon \eta_n/2\}})^{1/2}},
$$
where $Z_i = \widetilde a_i X_i + \widetilde b_i Y_i$.
We can estimate the maximum in the denominator in the following way
\begin{equation} \label{eq: max in denominator_1}
\max_k \E Z_k^2 \mathbb I{\{|Z_k| \geq \widetilde \varepsilon \eta_n/2\}} \le c \max_k \E X_k^2 \mathbb I{\{|X_k| \geq \widetilde\varepsilon \eta_n/4\}}.
\end{equation}
We take
$$
\widetilde \varepsilon = \varepsilon_1:=\frac{M_1 n^{1/4}}{\eta_n \ln n},
$$
where $M_1$ is some constant. 
Note that $\widetilde \varepsilon \le c' n^{-1/4} \ln^{-1} n$ and $\widetilde \varepsilon \eta_n = M_1 n^{1/4} \ln^{-1} n$. Choosing $M_1$, the right hand side of~\eqref{eq: max in denominator_1} 
can be made as small as $1 - \rho^2$, assuming that $X_k$ has uniformly integrable second moment. We conclude that
$$
Q(\widetilde S_{\mathcal{I}_1},\widetilde \varepsilon \eta_n)\le n^{-A_1},
$$
for some $A_1 > 0$.

2) Suppose that $|\mathcal{I}_2| \geq c n \ln^{-1} n/2$. For the set $\mathcal{I}_2$ we repeat the above procedure and find the set $I_{k_0}$ such that
$$
2^{k_0} \le |b_i| \le 2^{k_0+1} \text{ for all } i \in I_{k_0}
$$
and $|I_{k_0}| \geq c n\ln^{-2} n$.
We may reduce the sum $\widetilde S_{\mathcal{I}_2}$ to this set and  $Q(\widetilde S_{\mathcal{I}_2},\widetilde \varepsilon \eta_n)  \le Q(\widetilde S_{I_{k_0}}, \widetilde \varepsilon \eta_n)$.
On the set $I_{k_0}$ the variance $\sigma^2 = \E \widetilde S_{\mathcal{I}_2}^2$ is bounded below by
$$
\sigma^2 \geq (1 - \rho^2) ||b||_2^2 \geq (1 - \rho^2) |I_{k_0}| 2^{2k_0}
$$
Applying Lemma~\ref{ap:l.concentration} we get
$$
Q(\widetilde S_{I_{k_0}},\widetilde \varepsilon \eta_n)\le \frac{\sqrt{\widetilde \varepsilon \eta_n }}{\sqrt{|I_{k_0}|}(2(1 - \rho^2) 2^{2k_0} - 8 \max_k \E Z_k^2 \mathbb I{\{|Z_k| \geq \widetilde \varepsilon \eta_n/2\}})^{1/2}}.
$$
Note that
\begin{equation} \label{eq: max in denominator_2}
\max_k \E Z_k^2 \mathbb I{\{|Z_k| \geq \widetilde \varepsilon \eta_n\}} \le c 2^{2k_0} \max_k \E X_k^2 \mathbb I{\{|X_k| \geq \widetilde \varepsilon \eta_n 2^{-k_0-1}/4\}}
\end{equation}
We take
$$
\widetilde \varepsilon = \varepsilon_2:= \frac{M_2 n^{1/8} 2^{k_0}}{\eta_n \ln^2 n},
$$
where $M_2$ is some constant. One may check that $\widetilde \varepsilon \le c'' n^{-1/8}\ln^{-2}n$ and $\widetilde \varepsilon \eta_n 2^{-k_0} = M_2 n^{1/8} \ln^{-2} n$. Choosing $M_2$, the right hand side of~\eqref{eq: max in denominator_2} can be made as small as $(1 - \rho^2)2^{2k_0}$, assuming that $X_k$ has a uniformly integrable second moment. We conclude that
$$
Q(\widetilde S_{\mathcal{I}_1},\widetilde \varepsilon \eta_n)\le n^{-A_2},
$$
for some $A_2 > 0$.

Now we take $\widetilde \varepsilon = \min(\varepsilon_1, \varepsilon_2)$ and conclude the statement.

\end{proof}

\section{Application to the elliptic law}

In this section we briefly discuss the application of Theorem~\ref{th:main} to the elliptic law.

Denote by $\lambda_1, ..., \lambda_n$ the eigenvalues of the matrix $n^{-1/2} \X$ and define the empirical spectral measure of eigenvalues by
$$
\mu_n(B) = \frac{1}{n} \# \{1 \le i \le n: \lambda_i \in B \}, \quad B \in \mathcal{B}(\C),
$$
where $\mathcal{B}(\C)$ is a Borel $\sigma$-algebra of $\C$.

We say that a sequence of random probability measures $m_n(\cdot)$
converges weakly in probability to the probability measure $m(\cdot)$ if for all continuous and bounded functions $f: \C \rightarrow \C$ and all $\varepsilon > 0$
$$
\lim_{n \rightarrow \infty}\Pb \left ( \left | \int_\C f(x) m_{n}(dz) - \int_\C f(x)m(dz) \right | > \varepsilon \right ) = 0.
$$
We denote weak convergence by the symbol $\xrightarrow{weak}$.

A fundamental problem in the theory of random
matrices is to determine the limiting distribution of $\mu_n$
as the size of the random matrix tends to infinity. The following theorem gives the solution of this problem for matrices which satisfy the conditions $\Cond$ and $\UI$.
\begin{theorem} {\bf (Elliptic Law)}\label{th:elliptic main}
Let the entries $X_{jk}, 1\le j,k \le n$, of the matrix $\X$ satisfy the conditions $\Cond$ and $\UI$. Assume that $|\rho| < 1$. Then $\mu_n \xrightarrow{weak} \mu$ in probability, and $\mu$ has the density $g$:
$$
g(x, y) = \begin{cases}
  \frac{1}{\pi (1 - \rho^2)}, & x, y \in \mathcal E, \\
  0, & \text{otherwise,}
\end{cases}
$$
where
$$
\mathcal E := \left \{ u,v \in \R: \frac{u^2}{(1+\rho)^2} + \frac{v^2}{(1-\rho)^2} \le 1 \right \}.
$$
\end{theorem}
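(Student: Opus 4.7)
The plan is to apply Girko's Hermitization method, which is the standard route for passing from a non-Hermitian random matrix to its limiting spectral distribution. For each $z\in\C$, introduce the Hermitized matrix $\HH(z):=(n^{-1/2}\X-zI)^*(n^{-1/2}\X-zI)$ and let $\nu_n^z$ denote the empirical distribution of its singular values (equivalently, of $\sqrt{\lambda}$ for $\lambda$ an eigenvalue of $\HH(z)$). Since
$$
U_{\mu_n}(z):=-\int_\C\log|w-z|\,\mu_n(dw)=-\int_0^\infty\log x\,\nu_n^z(dx),
$$
proving $\mu_n\xrightarrow{\text{weak}}\mu$ in probability reduces, via the lower semicontinuity of the logarithmic potential and standard Girko-type arguments (see e.g.\ the framework developed in \cite{TaoVu2010}), to verifying two ingredients for Lebesgue-a.e.\ $z\in\C$: first, weak convergence in probability $\nu_n^z\to\nu^z$ to a deterministic limit $\nu^z$ depending only on $z$ and $\rho$; and second, uniform integrability of $\log x$ with respect to $\{\nu_n^z\}_{n\ge1}$ (both near $0$ and near $\infty$).

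For the first ingredient I would compute the Stieltjes transform $s_n^z(w)=\int(x-w)^{-1}\nu_n^z(dx)$ via the standard $2n\times 2n$ Hermitian block linearization
$$
\V(z,w)=\begin{bmatrix} wI & n^{-1/2}\X-zI \\ n^{-1/2}\X^*-\bar z I & wI\end{bmatrix},
$$
and perform the usual resolvent/Schur-complement analysis, using the pairwise independence of the vectors $(X_{jk},X_{kj})$ together with $\E X_{jk}X_{kj}=\rho$. This yields a deterministic limiting system characterizing the Stieltjes transform of $\nu^z$; the derivation parallels the derivation of the elliptic law system in the i.i.d.\ correlated case (cf.\ the Gaussian Ginibre-with-correlation computations), and produces the density $g$ supported on the stated ellipse $\mathcal E$. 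Condition \UI\ provides the truncation needed to pass from bounded to uniformly integrable entries without losing any moments beyond the second.

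For the second ingredient, the tail at $+\infty$ is controlled by the Frobenius-norm bound $\int_M^\infty\log x\,\nu_n^z(dx)\le (\log M)^{-1}(\|n^{-1/2}\X\|_2^2+|z|^2)$, which is bounded in expectation under \UI. The tail near $0$ is where Theorem~\ref{th:main} is indispensable: applying it with $\M_n=-z\sqrt{n}\,I$ (so $K=|z|$, $Q=1/2$) gives
$$
\Pb\bigl(s_n(n^{-1/2}\X-zI)\le n^{-B-1/2}\bigr)\le Cn^{-A},
$$
ruling out a catastrophic contribution from the very smallest singular value. To handle the remaining small singular values, I would combine this with a Tao--Vu-style intermediate singular value estimate (using a negative second moment bound for the resolvent at a small imaginary part, obtained from the Stieltjes-transform convergence in the first step) to show $n^{-1}\sum_{i:\,s_i\le n^{-c}}|\log s_i|\to 0$ in probability for some $c>0$.

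The main obstacle is the convergence of $\nu_n^z$ itself: the correlated structure couples the $(j,k)$ and $(k,j)$ entries, so the fixed-point equation for the limiting Stieltjes transform is genuinely two-dimensional and one has to carry out the Schur-complement expansion carefully to track the $\rho$-dependent off-diagonal terms, while simultaneously using \UI\ to truncate entries at a rate that does not destroy the second moment. Once that limit is identified and the uniform integrability near $0$ is established via Theorem~\ref{th:main} together with an intermediate singular value bound, Girko's dominated convergence argument for logarithmic potentials closes the proof and identifies $\mu$ with the uniform distribution on $\mathcal E$ of density $g$.
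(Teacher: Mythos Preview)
Your overall architecture is the same as the paper's: Girko's Hermitization via the logarithmic potential, reducing the problem to (i) weak convergence of the singular value measures $\nu_n^z$ to a deterministic $\nu_z$ and (ii) uniform integrability of $\log x$ against $\{\nu_n^z\}$, with Theorem~\ref{th:main} supplying the crucial bound on the smallest singular value and an intermediate singular value estimate (the paper uses the bound $s_{n-i}\ge ci/n$ for $i\ge n^{1-\gamma}$) handling the rest of the small tail. So the skeleton is correct.

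The one substantive departure is how you propose to identify the limit. You plan to compute the Stieltjes transform of $\nu_z$ directly from the $2n\times 2n$ linearization, solve the resulting $\rho$-dependent fixed-point system, and then invert to recover the elliptic density $g$. The paper sidesteps this computation entirely by a universality argument: it shows (via a Lindeberg-type replacement, citing the Bentkus method and~\cite{GotNauTikh2012,Nau2012}) that the limit $\nu_z$ is the \emph{same} for every matrix satisfying \Cond\ and \UI, in particular for the Gaussian ensemble of Section~\ref{sec: gaussian case}. Since the Gaussian elliptic law is already known from~\cite{SomStein1988}, the logarithmic potential $U_\mu$ of the elliptic law equals $-\int\log x\,\nu_z(dx)$; uniqueness of the potential then forces the general limit to coincide with $\mu$. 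Your route is more self-contained but requires carrying out the two-dimensional fixed-point analysis and extracting the density, which is real work you have only sketched; the paper's route is shorter but imports the Gaussian result as a black box. Either is acceptable, but be aware that in your approach the step ``the derivation \ldots\ produces the density $g$'' hides the main computation.
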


\begin{figure}
\begin{center}
\scalebox{.43}{\includegraphics{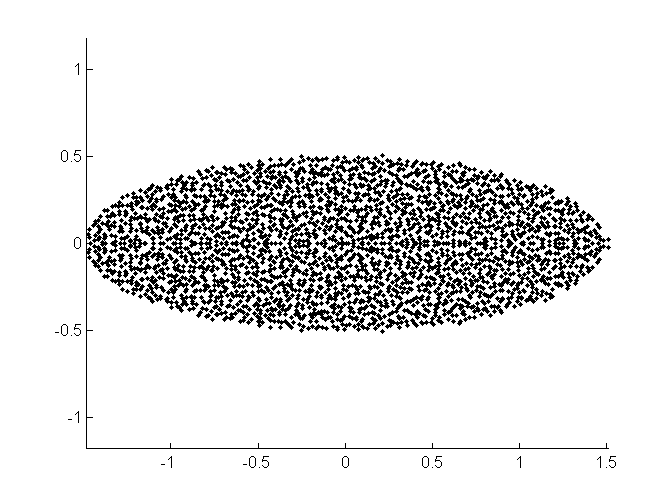}}
\scalebox{.43}{\includegraphics{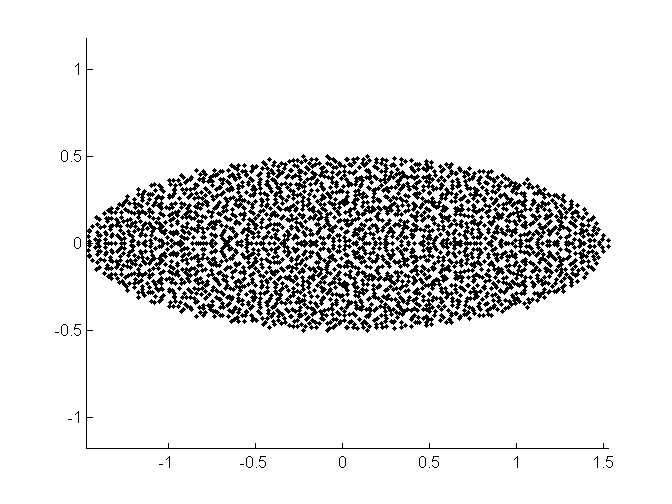}}
\end{center}
\caption{Eigenvalues of the matrix $n^{-1/2} \X$ for $n = 3000$ and $\rho = 0.5$.
On the left, each entry is an iid Gaussian normal random variable.
On the right, each entry is an iid Bernoulli random variable, taking the values $+1$ and $-1$ each with probability $1/2$. }
\label{fig:ellip1}
\end{figure}

Theorem~\ref{th:elliptic main} asserts that under assumptions $\Cond$ and $\UI$ the empirical spectral measure weakly converges in probability to the uniform distribution on the ellipse.
The axis of the ellipse are determined by the correlation $\E X_{jk} X_{k j} = \rho, 1 \le j < k \le n$. This result was called by Girko ``Elliptic Law''.
The limiting distribution doesn't depend on the distribution of the matrix elements and in this sense the result   is universal.

In 1985 Girko proved the elliptic law for rather general ensembles of random matrices under the assumption
that the matrix elements have a density, see~\cite{Girko1985} and~\cite{Girko2006}.
Girko used the method of characteristic functions. Using
a so-called  $V$-transform he reduced the problem
to the problem for Hermitian matrices $(n^{-1/2} \X - z \I)^{*}(n^{-1/2} \X - z \I)$ and
established the convergence of the empirical spectral distribution of singular values of $n^{-1/2}\X - z \I$ to some
limit which determines the elliptic law.  Under the assumption of a finite fourth moment  the elliptic law was recently proved by Naumov in~\cite{Nau2012}. Recently Nguyen and O'Rourke in~\cite{ORourkeNguyen2012} have proved the result of Theorem~\ref{th:elliptic main} for i.i.d. r.v.'s, assuming the second moment finite. Applying the result of Theorem~\ref{th:main} we may extend the elliptic law to the class of random matrices with non i.i.d. entries which satisfy the conditions $\Cond$ and $\UI$.

Below we shall provide a short outline of the proof of Theorem~\ref{th:elliptic main}.
\subsection{Gaussian case} \label{sec: gaussian case}

Assume that the  elements of a real random matrix $\X$ have Gaussian distribution with zero mean and correlations
$$
\E X_{i j}^2 = 1 \text{ and } \E X_{i j} X_{i j} = \rho, \quad 1 \le i < j \le n,  \quad |\rho| < 1.
$$
In~\cite{SomStein1988} it was shown that the ensemble of such matrices can be specified by the probability measure
$$
\Pb(d\A) \thicksim \exp \left [-\frac{n}{2(1 - \rho^2)} \Tr(\A \A^T - \rho \A^2) \right ],
$$
on the space of matrices $\A$. It was proved that $\mu_n \xrightarrow{weak} \mu$, where $\mu$ has a density from Theorem~\ref{th:main}, see~\cite{SomStein1988} for details. We will use this result to prove Theorem~\ref{th:elliptic main} in the general case.

For the discussion of the elliptic law in the Gaussian case see also~\cite{FyodSomm1998},~\cite[Chapter~18]{Akem2011} and~\cite{Ledoux2008}.

\subsection{Proof of the elliptic law}
In the general case 
we have to consider elements of the matrix $\X$ with arbitrary distributions. To overcome this difficulty we shall use the method of logarithmic potentials. Let us denote by $s_1 \geq s_2 \geq ... \geq s_n$ the singular values of $n^{-1/2}\X - z \I$ and introduce the empirical spectral measure $\nu_n(\cdot, z)$ of the singular values.  The convergence in Theorem~\ref{th:elliptic main} will be proved via the convergence of the logarithmic potential of $\mu_n$ to the logarithmic potential of $\mu$.
We can rewrite the logarithmic potential of $\mu_n$ via the logarithmic moments of measure $\nu_{n}$ by
\begin{align*} \label{eq:log. potential}
&U_{\mu_n} (z) = -\int_\C \ln|z - w| \mu_n(dw) = -\frac{1}{n} \ln \left |\det \left (\frac{1}{\sqrt n} \X - z\I \right )\right| \\
&= - \frac{1}{2 n} \ln \det \left (\frac{1}{\sqrt n} \X-z\I \right )^{*}\left (\frac{1}{\sqrt n} \X-z\I \right ) =
-\int_0^\infty \ln x \nu_{n}(dx).
\end{align*}
This allows us to consider the Hermitian matrices $(n^{-1/2} \X-z\I)^{*}(n^{-1/2} \X-z\I)$ instead of $n^{-1/2} \X$. To prove Theorem~\ref{th:main} we need the following lemma.
\begin{lemma} \label{l:Girko}
Suppose that for a.a. $z \in \mathbb C$ there exists a probability measure $\nu_z$ on $[0, \infty)$ such that\\
a) $\nu_n \xrightarrow{weak} \nu_z$ as $n \rightarrow \infty$ in probability\\
b) $\ln$ is uniformly integrable in probability with respect to $\{\nu_n\}_{n \geq 1}$.

Then there exists a probability measure $\mu$ such that\\
a) $\mu_{n} \xrightarrow{weak} \mu$ as $n \rightarrow \infty$ in probability \\
b) for a.a. $z \in \C$
$$
U_{\mu}(z) = - \int_{0}^\infty \ln x \nu_z(dx).
$$
\end{lemma}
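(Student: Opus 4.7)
The plan is to apply the standard Girko Hermitization / logarithmic potential method: combine (i) pointwise (in $z$) convergence in probability of $U_{\mu_n}(z)$, (ii) tightness of $\{\mu_n\}$, and (iii) uniqueness of a probability measure given its logarithmic potential. Throughout, set $U(z):=-\int_0^\infty \ln x\,\nu_z(dx)$ for those $z$ where hypotheses (a) and (b) hold.

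\textbf{Convergence of logarithmic potentials.} For such $z$, the identity
\begin{equation*}
U_{\mu_n}(z) = -\tfrac{1}{n}\ln\bigl|\det(n^{-1/2}\X-z\I)\bigr| = -\int_0^\infty \ln x\,\nu_n(dx),
\end{equation*}
together with $\nu_n\xrightarrow{weak}\nu_z$ from (a) and the uniform integrability of $\ln x$ against $\{\nu_n\}$ from (b), yields $U_{\mu_n}(z)\to U(z)$ in probability. Tightness of $\{\mu_n\}$ follows from standard operator norm bounds on $n^{-1/2}\X$ (the remark after Theorem~\ref{th:main} together with classical Bai--Yin type estimates), which place all eigenvalues of $n^{-1/2}\X$ in a fixed compact disk with probability $1-o(1)$.

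\textbf{Identification of the limit.} By Fubini, the pointwise convergence in probability upgrades to $U_{\mu_n}\to U$ in $L^1_{\mathrm{loc}}(\C)$ in probability. Given any subsequence of $\{\mu_n\}$, tightness plus a diagonal extraction provide a sub-subsequence $\{\mu_{n_k}\}$ that converges weakly almost surely to a random probability measure $\mu^*$, with $U_{\mu_{n_k}}\to U$ in $L^1_{\mathrm{loc}}(\C)$ almost surely. By local integrability of $\ln|\cdot-z|$ and weak convergence, also $U_{\mu_{n_k}}\to U_{\mu^*}$ in $L^1_{\mathrm{loc}}$, so $U_{\mu^*}=U$ a.e.\ on $\C$. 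The unicity theorem for logarithmic potentials then forces $\mu^*$ to coincide with the deterministic measure $\mu$ characterized by $U_\mu=U$. Since every subsequence admits a further subsequence converging in probability to the same deterministic $\mu$, the full sequence satisfies $\mu_n\xrightarrow{weak}\mu$ in probability; this is (a), and (b) is immediate from the construction of $U$.

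\textbf{Main obstacle.} The delicate point is synchronizing the $z$-pointwise convergence of $U_{\mu_n}(z)$ with the weak convergence of $\mu_n$: a priori, both the null set of exceptional $z$ and the convergent subsequence depend on $z$. The upgrade to $L^1_{\mathrm{loc}}$ convergence in probability, which requires uniform control of $\int \ln x\,\nu_n(dx)$ near $x=0$, is exactly what allows a single sub-subsequence to work for almost every $z$ simultaneously. This small-$x$ control is precisely the content of hypothesis (b), and it is here that Theorem~\ref{th:main} enters when one verifies the hypotheses for specific matrix ensembles, since the least singular value bound prevents pathological accumulation of singular values of $n^{-1/2}\X-z\I$ near the origin.
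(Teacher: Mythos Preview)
The paper does not actually prove this lemma: its ``proof'' consists of a single citation to Lemma~4.3 of Bordenave--Chafa\"{\i} (\cite{BordCh2011}). Your proposal is a correct outline of the standard Hermitization argument that this reference contains, so in substance you and the paper agree.

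One small mismatch worth flagging: you obtain tightness of $\{\mu_n\}$ by invoking operator-norm bounds on $n^{-1/2}\X$. This is fine in the present concrete setting, but the lemma as stated (and as proved in \cite{BordCh2011}) is meant to be a general black box depending only on hypotheses (a) and (b). In the abstract argument tightness is instead extracted from the finiteness of $U(z)=-\int_0^\infty \ln x\,\nu_z(dx)$ at a single point together with the Weyl majorization inequality $\sum_j \log|\lambda_j| \le \sum_j \log s_j$, which bounds $\frac1n\sum_j\log_+|\lambda_j|$ in terms of quantities controlled by (b). Your route is shorter here because the matrix model is explicit, but it slightly breaks the modularity that the lemma is designed to provide.
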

\begin{proof}
See \cite{BordCh2011}[Lemma~4.3] for the proof.
\end{proof}

Suppose now that $\X$ satisfies the conditions a) and b) of Lemma~\ref{l:Girko} and the measure $\nu_z$ is the same for all matrices which satisfy the conditions $\Cond$ and $\UI$. Then there exist a probability measure $\hat \mu$ such that $\mu_n \xrightarrow{weak} \hat \mu$ and
$$
U_{\hat \mu}(z) = - \int_{0}^\infty \ln x \nu_z(dx).
$$
We know that in the Gaussian case $\mu_n$ converges to the elliptic law  $\mu$. Due to the assumption that $\nu_z$ is the same for all matrices which satisfy the conditions $\Cond, \UI$ and Lemma~\ref{l:Girko} we have
$$
U_{\mu}(z) = - \int_{0}^\infty \ln x \nu_z(dx)
$$
We get $U_{\hat \mu}(z) = U_{\mu}(z)$. From the uniqueness of the logarithmic potential we may conclude the statement of the Theorem.

It remains to check the assumptions we have made in the beginning of the proof.  The following lemma proves the condition a) of Lemma~\ref{l:Girko} and shows that $\nu_z$ is the same for all matrices which satisfy the conditions $\Cond$ and $\UI$.

We say the entries $X_{j,k}$, $1\le j,k\le n$, of the matrix $\X$ satisfy Lindeberg's condition $\Lind$ if
$$
\text{for all $\tau>0$ } \quad \frac{1}{n^2}\sum_{i,j=1}^n \E X_{ij}^2 \mathbb I(|X_{ij}| \geq \tau \sqrt n) \rightarrow 0 \text{ as } n \rightarrow \infty.
$$
It easy to see that $\UI \Rightarrow \Lind$

Let $\mathcal{F}_n(x,z)$ be the empirical distribution function of the singular values $s_1 \geq ... \geq s_n$
of the matrix $n^{-1/2}\X - z\I$ which corresponds to the measure $\nu_n(z, \cdot)$.
\begin{lemma}\label{th:Stieltjes transform}
Let $X_{jk}$, $1\le j,k\le n$, satisfy the conditions \Cond and \Lind. Then there exists a non-random distribution function $\mathcal{F}(x,z)$ such that
for all continuous and bounded functions $f(x)$, a.a. $z \in \C$ and all $\varepsilon > 0$
\begin{equation*}
\Pb \left ( \left |\int_\R f(x) d\mathcal{F}_n(x,z) - \int_\R f(x) d\mathcal{F}(x,z) \right | > \varepsilon \right ) \rightarrow 0 \text{ as } n \rightarrow \infty,
\end{equation*}
\end{lemma}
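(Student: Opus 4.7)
\textbf{Proof plan for Lemma \ref{th:Stieltjes transform}.} The plan is to use the Hermitization trick together with the Stieltjes transform method, adapted to handle the correlation between $X_{jk}$ and $X_{kj}$. First I would replace the singular value problem by a Hermitian eigenvalue problem: introduce the $2n\times 2n$ Hermitian matrix
\begin{equation*}
\V(z) = \begin{pmatrix} \OO_n & n^{-1/2}\X - z\I \\ (n^{-1/2}\X - z\I)^{*} & \OO_n \end{pmatrix},
\end{equation*}
whose spectrum is symmetric and equals $\{\pm s_j(n^{-1/2}\X - z\I)\}_{j=1}^n$. Then $\nu_n(\cdot,z)$ is determined by the empirical spectral distribution of $\V(z)$, and it suffices to prove that the Stieltjes transform $m_n(w,z) := \frac{1}{2n}\Tr(\V(z) - w\I_{2n})^{-1}$ converges, for every $w$ in the upper half-plane and a.a.\ $z$, to a non-random limit $m(w,z)$ that is the Stieltjes transform of a probability measure.

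Next I would establish concentration: $m_n(w,z) - \E m_n(w,z) \to 0$ in probability. Because the pairs $(X_{jk},X_{kj})$ are only mutually independent for $j<k$, the correct filtration is not per-row but rather by successive blocks: for each $j$ let $\mathcal F_j$ be the $\sigma$-algebra generated by all entries in rows and columns with index $\le j$. Writing $m_n - \E m_n$ as a telescoping sum of martingale differences $\E[m_n\mid\mathcal F_j]-\E[m_n\mid\mathcal F_{j-1}]$ and using the rank-two bound $|\Tr(\V-w\I)^{-1}-\Tr(\V^{(j)}-w\I)^{-1}|\le 4/\imag w$ under a rank-$2$ perturbation (removing the $j$th row and the $j$th column of $\X$ simultaneously), the Azuma--Hoeffding inequality yields $|m_n-\E m_n|=O(n^{-1/2})$ in probability.

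The heart of the argument is the derivation of the limiting self-consistent equation. I would apply the Schur complement formula to the diagonal blocks of $(\V(z)-w\I_{2n})^{-1}$, removing simultaneously the $j$th row and $j$th column of $\X$ (equivalently, two rows and two columns of $\V$, namely those with indices $j$ and $n+j$). The resulting $2\times 2$ Schur complement involves quadratic forms in the independent random pair consisting of the $j$th row and $j$th column of $\X$; here the correlation $\rho$ enters the computation because cross terms like $\sum_k X_{jk}X_{kj}\,r_{k,n+k}^{(j)}$ appear. By the Lindeberg condition \Lind{} together with a standard concentration-of-quadratic-forms estimate (for instance a truncation argument combined with a variance bound using the independence of the pairs), these quadratic forms concentrate around their expectations $\frac1n\Tr G^{(j)}$ and $\frac{\rho}{n}\Tr(J G^{(j)})$, where $G^{(j)}$ is the resolvent of the reduced matrix and $J$ is the natural swap block. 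This leads to a coupled system of two equations for two auxiliary Stieltjes-type quantities $m_1(w,z), m_2(w,z)$, with $m(w,z)$ expressible in terms of them; the system depends on $\rho$ and $z$ but not on the individual distributions of the entries.

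Finally, I would show that this system admits a unique solution in the class of Stieltjes transforms of probability measures (the standard argument is to produce a contraction in a suitable metric on pairs of analytic functions mapping the upper half-plane to itself, using $\imag w>0$). Uniqueness gives a deterministic limit $m(w,z)$; inverting the Stieltjes transform produces the limiting distribution $\mathcal F(\cdot,z)$, which does not depend on the particular law of the entries. Combining this with the concentration step from the second paragraph yields the claimed convergence in probability for any continuous bounded test function. The main obstacle is the bookkeeping in the Schur complement step: the correlation $\rho$ forces one to track a $2\times 2$ block and the cross quadratic form $\sum_k X_{jk}X_{kj}\,r_{k,n+k}^{(j)}$, whose concentration requires care because the summands are not centered and only $\UI$/$\Lind$ is available; everything else is a routine adaptation of the Bai--Silverstein/Girko methodology.
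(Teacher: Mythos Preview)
Your proposal is correct in outline, but it takes a genuinely different route from the paper. The paper does not derive the self-consistent equation directly; instead it invokes a Lindeberg--Bentkus replacement argument (Theorem~1.3 in \cite{GotNauTikh2012}, based on \cite{Bentkus2003}) to compare $\E m_n(w,z)$ for the given matrix with the same quantity for a Gaussian matrix having the prescribed correlation structure $\E X_{jk}X_{kj}=\rho$. The Lindeberg condition \Lind{} is exactly what makes the successive replacement of each pair $(X_{jk},X_{kj})$ by a Gaussian pair cost $o(1)$ in total, and the limit for the Gaussian ensemble is then imported from Theorem~5.1 of \cite{Nau2012}. Your approach---Hermitization, martingale concentration with the pair-indexed filtration, and the $2\times 2$ Schur complement yielding a $\rho$-dependent fixed-point system---is essentially the content of that cited result in \cite{Nau2012} (and of \cite{ORourkeNguyen2012}), carried out directly rather than by reduction. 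What the paper's route buys is brevity and modularity: universality is isolated in the replacement step, and the hard analytic work (existence and uniqueness of the self-consistent system) is outsourced. What your route buys is self-containment and an explicit identification of the limiting equations; the price is the bookkeeping you already flagged, namely controlling the cross quadratic form $\sum_k X_{jk}X_{kj}\,r^{(j)}_{k,n+k}$ under only \Lind, which requires a truncation argument rather than a direct moment bound.
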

\begin{proof}
Using the application of the general method of Bentkus from~\cite{Bentkus2003}, see Theorem~1.3 in~\cite{GotNauTikh2012}, one may reduce the problem to the Gaussian matrices and use the result of Theorem~5.1 from~\cite{Nau2012}. See also the recent paper of G{\"o}tze and Tikhomirov~\cite{GotTikh2013}.
\end{proof}

\begin{lemma}\label{l: log uniform integr}
Let $X_{jk}$, $1\le j,k\le n$, satisfy the conditions \Cond and \UI.  Then $\ln( \cdot )$ is uniformly integrable in probability with respect to $\{\nu_n\}_{n \geq 1}.$
\end{lemma}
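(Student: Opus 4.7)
The plan is to verify the standard sufficient condition that $\int |\ln x|^{1+\eta}\,d\nu_n(x,z)$ is bounded in probability for some $\eta>0$, which implies uniform integrability of $\ln$ with respect to $(\nu_n)_{n\ge 1}$ in probability (cf.~\cite{BordCh2011}). Write $\W := n^{-1/2}\X - z\I$ with singular values $s_1\ge\cdots\ge s_n$, and split the integral at $x=1$. For the upper tail, the elementary bound $(\ln x)^{1+\eta} \le C_\eta x^{1/2}$ on $[1,\infty)$ together with Cauchy--Schwarz yields
$$\int_1^\infty (\ln x)^{1+\eta}\,d\nu_n \;\le\; C_\eta \Bigl(\tfrac{1}{n}\|\W\|_2^2\Bigr)^{1/4}.$$
Since $\E \|\W\|_2^2 \le C(1+|z|^2)\,n$, Markov's inequality delivers tightness at infinity.

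For the lower tail $\tfrac{1}{n}\sum_{s_i<1} |\ln s_i|^{1+\eta}$, Theorem~\ref{th:main} is applied to $\X + \M_n$ with $\M_n := -\sqrt n\, z\,\I$ (whose norm $|z|\sqrt n$ obeys the hypothesis $\|\M_n\|\le Kn^Q$) to obtain $s_n(\W) \ge n^{-B}$ on an event of probability at least $1-Cn^{-A}$. On this event every $s_i$ exceeds $n^{-B}$, so the integral is effectively cut off at $n^{-B}$ from below. To upgrade the crude universal bound $|\ln s_i| \le B\ln n$ into something summable, we control the counting function $N_n(\delta) := \#\{i : s_i \le \delta\}$ via the weak convergence $\nu_n \xrightarrow{weak} \nu_z$ in probability (Lemma~\ref{th:Stieltjes transform}) combined with the explicit form of $\nu_z$ coming from the Gaussian analysis of Section~\ref{sec: gaussian case}, which shows that $\nu_z$ is absolutely continuous with bounded density in a neighborhood of $0$. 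Writing the lower-tail integral by parts as $\int_0^1 |\ln x|\,d\nu_n = \int_0^1 t^{-1}\, N_n(t)/n \,dt$ (and analogously for the $(1+\eta)$-power), splitting at a small but fixed $\delta_0$, and using $N_n(t)/n \le C t$ for $t \in [n^{-B},\delta_0]$ with high probability, the contribution of each sub-interval is controlled; the part $t\in[\delta_0,1]$ is trivially bounded by $|\ln\delta_0|^{1+\eta}$.

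The hardest step is the uniform control on $N_n(\delta)$ for $\delta$ ranging down to the polynomial scale $n^{-B}$. Weak convergence $\nu_n \to \nu_z$ only yields this for each fixed $\delta > 0$; extending it to $\delta$ at a polynomially small scale requires either a quantitative concentration of the counting function, obtained by comparing the Stieltjes transform $m_n(\eta,z)$ with its limit at scales $\eta$ slightly larger than $1/n$ and then exploiting Helffer--Sj\"ostrand type formulae, or a direct union-bound argument combining Theorem~\ref{th:main} applied to appropriate principal submatrices of $\W$ (via Cauchy--Fan interlacing $s_{n-k}(\W)\ge s_{\min}(\W_{n-k})$) with the local smoothness of $\nu_z$ near the origin. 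Without such a refinement, Theorem~\ref{th:main} alone only gives $|\ln s_i| \le B\ln n$, which when summed over $i$ yields $(B\ln n)^{1+\eta}$ and is not bounded; it is precisely the interplay between the polynomial lower bound on $s_n$ and the smoothness of the limit $\nu_z$ that prevents the singular values from accumulating too much log-mass near zero.
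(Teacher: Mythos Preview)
Your upper-tail argument is fine and matches the paper's. The gap is exactly where you flag it yourself: the control of the lower tail requires an estimate on intermediate singular values, and neither of your proposed fixes is carried out. Weak convergence $\nu_n\to\nu_z$ plus smoothness of $\nu_z$ near $0$ gives nothing at scales $\delta=\delta(n)\to 0$; the Helffer--Sj\"ostrand/local-law route would demand a quantitative Stieltjes-transform bound at $\eta\sim n^{-1+\epsilon}$, which is a substantial result in its own right and not available from the tools in this paper; and the interlacing idea $s_{n-k}(\W)\ge s_{\min}(\W_{n-k})$ combined with Theorem~\ref{th:main} on submatrices only yields $s_{n-k}\ge (n-k)^{-B}$, which is still polynomially small and does not prevent log-mass accumulation.

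The paper sidesteps all of this by invoking a separate intermediate singular-value bound (Lemma~\ref{l:large singular values}, an extension of \cite[Lemma~4.3]{Nau2012}): almost surely for large $n$, $s_{n-i}(\W)\ge c\,i/n$ for all $n^{1-\gamma}\le i\le n-1$ with some $0<\gamma<1$. With this in hand the proof is short: one shows $\int x^{-q}\,d\nu_n$ is bounded in probability for small $q>0$ by splitting
\[
\frac1n\sum_{i=1}^n s_i^{-q}
\;=\;\frac1n\sum_{i\le n-\lceil n^{1-\gamma}\rceil}s_i^{-q}\;+\;\frac1n\sum_{i>n-\lceil n^{1-\gamma}\rceil}s_i^{-q}.
\]
The first sum is handled by $s_{n-i}\ge ci/n$, giving $\le c^{-q}\int_0^1 s^{-q}\,ds<\infty$ for $q<1$; the second sum has only $n^{1-\gamma}$ terms and each satisfies $s_i\ge n^{-B-1/2}$ by Theorem~\ref{th:main}, giving $\le 2n^{q(B+1/2)-\gamma}$, which is bounded once $q<\gamma/(B+1/2)$. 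No appeal to the limit $\nu_z$ or to local laws is needed. The ingredient you are missing is precisely this deterministic-style bound on the bulk of the small singular values; it is usually proved via the negative-second-moment identity and distance-to-hyperplane estimates, not via Theorem~\ref{th:main}.
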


\begin{proof}[Proof of Lemma~\ref{l: log uniform integr}]
It is enough to show that there exist $p, q > 0$ such that
\begin{equation} \label{eq:log+ u.i.}
\lim_{t \rightarrow \infty} \varlimsup_{n \rightarrow \infty} \mathbb P \left (\int_0^\infty x^{p} \nu_n(dx) > t \right ) = 0
\end{equation}
and
\begin{equation} \label{eq:log- u.i.}
\lim_{t \rightarrow \infty} \varlimsup_{n \rightarrow \infty} \mathbb P \left (\int_0^\infty x^{-q} \nu_n(dx) > t \right ) = 0.
\end{equation}
From Kolmogorov's strong law of large numbers it follows that
$$
\int_0^\infty x^2 d\mathcal{F}(x,0) \le \frac{1}{n^2} \sum_{i,j=1}^n X_{ij}^2 \rightarrow 1 \text{ as } n \rightarrow \infty.
$$
Applying this and the fact that $s_i(n^{-1/2}\X - z \I) \le s_i(n^{-1/2}\X) + |z|$ we may conclude~\eqref{eq:log+ u.i.} taking $p = 2$.

Using the uniform integrability of the second moment of $X_{jk}$ one may prove the extension of Lemma~4.3 from~\cite{Nau2012}.
\begin{lemma}\label{l:large singular values}
If the conditions of Lemma~\ref{l: log uniform integr} hold then there exist $c > 0$ and $0 < \gamma < 1$ such that a.s. for $n \gg 1$ and $n^{1-\gamma} \le i \le n-1$
$$
s_{n-i}(n^{-1/2}\X - z\I) \geq c \frac{i}{n}.
$$
\end{lemma}
We continue now the proof of Lemma~\ref{l: log uniform integr}. Denote the event $\Omega_1:= \Omega_{1,n} = \{\omega \in \Omega: s_{n-i} > c\frac{i}{n}, n^{1-\gamma} \le i \le n-1\}$.  Let us consider the event $\Omega_2:=\Omega_{2,n} = \Omega_{1} \cap \{\omega: s_n \geq n^{-B-1/2}\}$, where $B > 0$ will be chosen later. We decompose the probability from~\eqref{eq:log- u.i.} into two terms
$$
\Pb \left (\int_0^\infty x^{-q} \nu_n(dx) > t \right ) = \mathbb I_1 + \mathbb I_2,
$$
where
\begin{align*}
&\mathbb I_1: = \Pb \left (\int_0^\infty x^{-q} \nu_n(dx) > t, \Omega_{2} \right),\\
&\mathbb I_2: = \Pb \left (\int_0^\infty x^{-q} \nu_n(dx) > t,  \Omega_{2}^c \right).
\end{align*}
We may estimate $\mathbb I_2$ by
$$
\mathbb I_2 \le \Pb(s_n(\X - \sqrt n z \I) \le n^{-B}) + \Pb(\Omega_{1}^c).
$$
From Theorem~\ref{th:main} it follows that there exist $A, B > 0$ such that
\begin{equation}\label{eq: least sv}
\Pb(s_n(\X - \sqrt n z \I) \le n^{-B}) \le C n^{-A}.
\end{equation}
By Lemma~\ref{l:large singular values}
\begin{equation} \label{eq: large sv}
\varlimsup_{n \rightarrow \infty} \Pb(\Omega_{1}^c) = 0.
\end{equation}
From~\eqref{eq: least sv} and~\eqref{eq: large sv} we conclude
$$
\varlimsup_{n \rightarrow \infty} \mathbb I_2 = 0.
$$
To prove~\eqref{eq:log- u.i.} it remains to bound  $\mathbb I_1$. From Markov's inequality
$$
\mathbb I_1 \le \frac{1}{t} \E \left[\int_0^\infty x^{-q} \nu_n(dx) \mathbb I(\Omega_{2}) \right ].
$$
By the definition of $\Omega_{2}$
\begin{align*}
\E \left[\int x^{-q} \nu_n(dx) \mathbb I(\Omega_2) \right ] \le \frac{1}{n} \sum_{i=1}^{n-\lceil n^{1-\gamma} \rceil} s_{i}^{-q} + \frac{1}{n}\sum_{i=n - \lceil n^{1-\gamma} \rceil + 1}^{n} s_{i}^{-q} \\
\le 2 n^{q (B+1/2) - \gamma} + c^{-q} \frac{1}{n} \sum_{i=1}^n \left ( \frac{n}{i}\right )^q \le 2 n^{q (B+1/2)  - \gamma} + c^{-q}\int_{0}^1 s^{-q} ds.
\end{align*}
If $0 < q < \min(1, \gamma/(B+1/2))$ then the last integral is finite and we conclude the proof of the Lemma.
\end{proof}

\section{Appendix}
Let $X_1,X_2,\ldots$ be independent random variables with
\begin{equation}\label{eq: cond0}
 \E X_k=0 \text{ and } \E X_k^2=\sigma_k^2>0.
\end{equation}
We denote $\sigma^2 = \sum_{k = 1}^n \sigma_k^2$. Introduce the following quantity
$$
D(X, \lambda) = \frac{1}{\lambda^2} \int_{|X| < \lambda} x^2 dF(x) + \int_{|X| \geq \lambda } dF(x),
$$
where $F(x)$ is the distribution function of $X$. Denote by
\begin{equation}
 Q(X,\lambda)=\sup_{a\in\mathbb R}\Pb (|X-a|\le \lambda ).
\end{equation}
We shall prove here some simple results about the concentration function of sums of independent random variables.
\begin{lemma}\label{ap:l.concentration}
Assume that the condition~\eqref{eq: cond0} holds and let $S_n=\sum_{k=1}^n X_k$. Then
\begin{equation}
  Q(S_n,\lambda)\le \frac{\sqrt \lambda}{(2\sigma^2 - 8 \sum_{i = 1}^n \E X_j^2 \mathbb I(|X_j| \geq \lambda/2))^{1/2}}.
\end{equation}
\end{lemma}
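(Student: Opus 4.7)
The plan is to combine Esseen's smoothing inequality on the Fourier side with a truncation argument involving the symmetrization $X_k - X_k'$. First, I would invoke Esseen's concentration inequality: for any random variable $Y$,
\[
Q(Y,\lambda)\le C\lambda\int_{|t|\le 1/\lambda}|\phi_Y(t)|\,dt,
\]
where $\phi_Y$ is the characteristic function. Applied to $Y=S_n$, this reduces the task to upper-bounding $\prod_{k=1}^n|\phi_k(t)|$ on $|t|\le 1/\lambda$, where $\phi_k(t)=\E e^{itX_k}$.

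Second, using $|z|\le \exp\!\bigl(-(1-|z|^2)/2\bigr)$ for $|z|\le 1$, I would write
\[
\prod_{k=1}^n|\phi_k(t)|\le \exp\!\Big(-\tfrac12\sum_{k=1}^n\bigl(1-|\phi_k(t)|^2\bigr)\Big),
\]
and lower-bound $1-|\phi_k(t)|^2=\E\bigl(1-\cos(t(X_k-X_k'))\bigr)$, where $X_k'$ is an independent copy of $X_k$. On the event $\{|X_k|<\lambda/2\}\cap\{|X_k'|<\lambda/2\}$ the inequality $|t(X_k-X_k')|\le 1$ holds for $|t|\le 1/\lambda$, and $1-\cos u\ge c u^2$ on $|u|\le 1$ yields
\[
1-|\phi_k(t)|^2\ge c\,t^2\,\E(X_k-X_k')^2\,\one(|X_k|<\lambda/2)\one(|X_k'|<\lambda/2).
\]
Expanding $(X_k-X_k')^2 = X_k^2 - 2X_kX_k' + X_k'^2$ and using independence together with $\E X_k=0$ and $\E X_k^2=\sigma_k^2$, the right-hand side reduces to $c\,t^2$ times $2\sigma_k^2$ minus boundary contributions, each of which can be controlled by $\E X_k^2\,\one(|X_k|\ge \lambda/2)$.

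Summing over $k$ then gives $\prod_{k=1}^n|\phi_k(t)|\le \exp(-c' t^2 V_{\mathrm{eff}})$ with $V_{\mathrm{eff}} = 2\sigma^2 - 8\sum_k\E X_k^2\,\one(|X_k|\ge \lambda/2)$, and integrating this Gaussian-type bound over $|t|\le 1/\lambda$ via the Esseen inequality produces the claimed estimate on $Q(S_n,\lambda)$.

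The main technical obstacle is the third step, where the precise constants $2$ and $8$ emerge: expanding $\E(X_k-X_k')^2\,\one_A\one_{A'}$ with $A=\{|X_k|<\lambda/2\}$ generates cross terms of the form $(\E X_k\one_A)^2$ which must be controlled using $\E X_k=0$ (so that $\E X_k\one_A=-\E X_k\one_{A^c}$) together with Cauchy--Schwarz, eventually collecting everything into the single tail quantity $\E X_k^2\,\one(|X_k|\ge \lambda/2)$. Everything else in the argument is a standard Fourier-analytic calculation.
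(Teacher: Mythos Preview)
Your approach is essentially the same as the paper's: both are Fourier-analytic, both pass to the symmetrization $\widetilde X_k = X_k - X_k'$, and both extract the truncated second moment. The paper merely packages your first two steps by invoking Petrov's concentration inequality for sums (Theorem~3 in \cite{Petrov1975}, Ch.~II, \S2), which already delivers
\[
Q(S_n,\lambda)\le A\lambda\Big(\sum_{k}\lambda_k^2 D(\widetilde X_k,\lambda_k)\Big)^{-1/2},
\]
so the Esseen smoothing and the bound $|\phi_k(t)|\le\exp\bigl(-(1-|\phi_k(t)|^2)/2\bigr)$ are hidden inside that citation.

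The one place where the paper's bookkeeping is cleaner than yours is the truncation. Rather than restricting to $\{|X_k|<\lambda/2\}\cap\{|X_k'|<\lambda/2\}$ and then expanding $(X_k-X_k')^2$ (which, as you note, produces cross terms $(\E X_k\one_A)^2$ and a factor $P(A)$ that must be massaged back into tail moments), the paper truncates the symmetrized variable directly at level $\lambda$ and writes
\[
\E\widetilde X_k^2\,\one(|\widetilde X_k|<\lambda)=2\sigma_k^2-\E\widetilde X_k^2\,\one(|\widetilde X_k|\ge\lambda).
\]
The remaining task is then just to bound $\E\widetilde X_k^2\,\one(|\widetilde X_k|\ge\lambda)$, which follows from $(\widetilde X_k)^2\le 2X_k^2+2(X_k')^2$ together with the observation that $|\widetilde X_k|\ge\lambda$ forces $|X_k|\ge\lambda/2$ or $|X_k'|\ge\lambda/2$; a short case split and the elementary inequality $P(|X_k|\ge\lambda/2)\le 4\lambda^{-2}\E X_k^2\one(|X_k|\ge\lambda/2)$ give the constant $8$ without ever touching the cross term. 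Your route also closes (the cross terms can indeed be absorbed using $\E X_k\one_A=-\E X_k\one_{A^c}$, Cauchy--Schwarz, and the same Markov-type bound on $P(A^c)$), but the paper's organization avoids that detour entirely.
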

\begin{proof}
According to Theorem~3 in~\cite{Petrov1975} , Chapter II, \S 2, p. 43, we have
\begin{equation}
Q(S_n,\lambda)\le A\lambda\left\{\sum_{k=1}^n\lambda_k^2 D(\widetilde X_k, \lambda_k)\right\}^{-\frac12},
\end{equation}
where $\widetilde X_k=X_k- X_k'$, $X_k'$ is independent copy of $X_k$, and $0<\lambda_k\le \lambda$.
Note that
$$
\lambda_k^2 D(\widetilde X_k, \lambda_k) \geq \E \widetilde X_k^2 \mathbb I(|\widetilde X_k| < \lambda_k) = 2\sigma_k^2 - \E \widetilde X_k^2 \mathbb I(|\widetilde X_k| \geq \lambda_k).
$$
Furthermore,
\begin{align*}
&\E \widetilde X_k^2 \mathbb I(|\widetilde X_k| \geq \lambda_k) \le 2(\E (X_k^2 + (X_k')^2) \mathbb I(|\widetilde X_k| \geq \lambda_k) \le \\
&\le 2 \E X_k^2 \mathbb I(|X_k| \geq \lambda_k/2) + 2 \E X_k^2 \mathbb I(|\widetilde X_k| \geq \lambda_k, |X_k| \le \lambda_k/2) + \\
&\le 2 \E (X_k')^2 \mathbb I(|X_k'| \geq \lambda_k/2) + 2 \E (X_k')^2 I(|\widetilde X_k| \geq \lambda_k, |X_k'| \le \lambda_k/2) \le \\
&\le 2 \E X_k^2 \mathbb I(|X_k| \geq \lambda_k/2) + 2 \E X_k^2 \mathbb I(|X_k'| \geq \lambda_k/2, |X_k| \le \lambda_k/2) +\\
&\le 2 \E (X_k')^2 \mathbb I(|X_k'| \geq \lambda_k/2) + 2 \E (X_k')^2 \mathbb I(|X_k| \geq \lambda_k/2, |X_k'| \le \lambda_k/2) \le \\
&\le 8 \E X_k^2 \mathbb I(|X_k| \geq \lambda_k/2).
\end{align*}
This implies that
$$
\lambda_k^2 D(\widetilde X_k, \lambda_k) \geq 2\sigma_k^2 - 8 \E X_k^2 \mathbb I(|X_k| \geq \lambda_k/2).
$$
We take $\lambda_k=\lambda$ and conclude the statement of the lemma.
\end{proof}

\begin{lemma}\label{l,a:reduction}
Let $S_J = \sum_{i \in J} \xi_i$, where $J \subset [n]$, and $I \subset J$ then
$$
Q(S_J, \lambda ) \le Q(S_I, \lambda).
$$
\end{lemma}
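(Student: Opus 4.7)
The plan is to exploit the decomposition $S_J = S_I + S_{J \setminus I}$ together with the independence of the summands (the $\xi_i$ are independent in the context in which this lemma is applied in the proof of Lemma~\ref{key1}). The inequality is the standard fact that conditioning on a subset of independent summands cannot improve the concentration of the original sum.

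First I would fix $a \in \mathbb{R}$ arbitrary and write
\begin{equation*}
\Pb(|S_J - a| \le \lambda) = \E\bigl[\Pb(|S_I + S_{J\setminus I} - a| \le \lambda \mid S_{J\setminus I})\bigr].
\end{equation*}
By independence of $S_I$ and $S_{J\setminus I}$, the inner conditional probability equals $\Pb(|S_I - (a - S_{J\setminus I})| \le \lambda \mid S_{J\setminus I})$, which is bounded above by $\sup_{b \in \mathbb{R}} \Pb(|S_I - b| \le \lambda) = Q(S_I, \lambda)$ pointwise in the realization of $S_{J\setminus I}$. Taking expectation and then the supremum over $a$ on the left yields $Q(S_J, \lambda) \le Q(S_I, \lambda)$.

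There is really no obstacle here; the statement is a one-line consequence of the definition of the concentration function together with Fubini. The only thing worth flagging is the (implicit) independence hypothesis: the reduction step is applied in the proof of Lemma~\ref{key1} to sums whose summands are coordinates of independent vectors, so the conditioning argument is legitimate in the intended use.
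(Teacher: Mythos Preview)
Your proof is correct and essentially identical to the paper's: both decompose $S_J = S_I + S_{J\setminus I}$, condition on the summands outside $I$, use independence to bound the conditional probability by $Q(S_I,\lambda)$, and then take expectation and supremum. There is nothing to add.
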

\begin{proof}
Let us fix an arbitrary $v$. From the independence of $\xi_i$ we conclude
\begin{align*}
\Pb (|S_J - v| \le \lambda ) \le \E \Pb (|S_I + S_{J / I} - v| \le \lambda |\{ \xi_i\}_{i \in I}) \le Q(S_I, \lambda).
\end{align*}
\end{proof}

\begin{lemma} \label{l,a:incomp vec}
Let $\delta, \tau \in (0,1)$. Let $x \in \incomp(\delta,\tau)$. Then there exists a set $\sigma(x) \in [n]$ of cardinality $|\sigma(x)| \geq \frac{1}{2}n\delta$ such that
$$
\sum_{k \in \sigma(x)}|x_k|^2 \geq \frac{1}{2}\tau^2
$$
and
$$
\frac{\tau}{\sqrt{2 n}} \le |x_k| \le \frac{\sqrt 2}{\sqrt{\delta n}} \text{ for any } k \in \sigma(x),
$$
which we shall call $\sigma(x)$ the "spread set of $x$".
\end{lemma}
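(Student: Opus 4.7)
The plan is to partition the coordinates of $x$ into three sets based on magnitude thresholds matching the conclusion, and then to use the incompressibility hypothesis twice — once to force a lower bound on the size of the medium‐magnitude set, and once to force a lower bound on its mass. Explicitly, let
$$
L = \{k : |x_k| > \sqrt{2}/\sqrt{\delta n}\}, \quad S = \{k : |x_k| < \tau/\sqrt{2n}\}, \quad \sigma(x) = [n] \setminus (L \cup S).
$$
The pointwise bounds on $|x_k|$ for $k \in \sigma(x)$ hold by construction, so the remaining work is to bound $|\sigma(x)|$ from below and to bound $\sum_{k \in \sigma(x)} |x_k|^2$ from below.

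Next I would bound $|L|$ using only the fact that $\|x\|_2 = 1$: since each entry of $L$ contributes more than $2/(\delta n)$ to $\|x\|_2^2$, we get $|L| < \delta n /2$. This step is purely arithmetic and uses no incompressibility.

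The first use of incompressibility is to bound $|S|$. If $|S| > n - \delta n$, then the vector $y$ obtained from $x$ by zeroing out the coordinates in $S$ has $|\supp(y)| < \delta n$, hence $y$ is $\delta$-sparse, and
$$
\|x - y\|_2^2 = \sum_{k \in S} |x_k|^2 < |S| \cdot \frac{\tau^2}{2n} \le \frac{\tau^2}{2},
$$
so $\|x - y\|_2 < \tau$, contradicting the incompressibility of $x$. Thus $|S| \le n - \delta n$, and combined with $|L| < \delta n/2$ one obtains $|\sigma(x)| \ge n - |S| - |L| \ge \delta n/2$.

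The second use of incompressibility gives the mass bound. Since $|L| < \delta n/2 \le \delta n$, the vector $z = x \cdot \mathbb{1}_L$ is $\delta$-sparse, so incompressibility forces $\|x - z\|_2 \ge \tau$, i.e.\ $\sum_{k \in S} |x_k|^2 + \sum_{k \in \sigma(x)} |x_k|^2 \ge \tau^2$. Since the sum over $S$ is at most $\tau^2/2$ by the calculation in the previous step, we conclude $\sum_{k \in \sigma(x)} |x_k|^2 \ge \tau^2/2$. No step presents a genuine obstacle; the only subtlety is to choose the threshold $\sqrt{2}/\sqrt{\delta n}$ slightly larger than $1/\sqrt{\delta n}$ so that $|L|$ is strictly less than $\delta n/2$ (giving room for the second incompressibility application) and to choose $\tau/\sqrt{2n}$ small enough so that the total small-coordinate mass is at most $\tau^2/2$.
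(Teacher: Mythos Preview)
Your argument is correct and is precisely the standard proof of this fact. The paper itself does not prove the lemma but merely cites \cite[Lemma~3.4]{RudVesh2008} and \cite[Lemma~4.3]{GotTikh2010}; your write-up reproduces the argument found in those references essentially verbatim, so there is nothing to compare.
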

\begin{proof}
  See in~\cite[Lemma~3.4]{RudVesh2008} and~\cite[Lemma~4.3]{GotTikh2010}.
\end{proof}

\begin{lemma}\label{ap:l_conc_funct_2}
Assume that the condition~\eqref{eq: cond0} holds and let $S_n = \sum_{k = 1}^n a_k X_k$, where $\mathbf a = (a_1, ... , a_n) \in \incomp(\delta_n,r_n)$. We additionally suppose that
\begin{equation} \label{eq: uniform integr condition app}
\max_{i=1,...,n} \E|X_{i}|^2\mathbb I{\{|X_{jk}|>M\}} \rightarrow 0 \quad \text{as}\quad M \rightarrow \infty
\end{equation}
Then there exist constants $c$ and $c'$ such that
$$
Q(S_n,\varepsilon) \le c n^{-1/4} \delta_n^{-3/8}
$$
for all $\varepsilon < c' \left (\frac{\delta_n}{n} \right)^{1/4} r_n$.
\end{lemma}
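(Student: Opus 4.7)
The plan is to exploit the incompressibility of $\mathbf{a}$ to pass to a structured subset of coordinates, and then invoke the Esseen--Petrov concentration bound from Lemma~\ref{ap:l.concentration}. Three ingredients are combined: the structure theorem for incompressible vectors (Lemma~\ref{l,a:incomp vec}), the monotonicity of the concentration function under restriction (Lemma~\ref{l,a:reduction}), and Petrov's inequality.

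First, since $\mathbf{a}\in\incomp(\delta_n,r_n)$, Lemma~\ref{l,a:incomp vec} produces a spread set $\sigma(\mathbf{a})\subset[n]$ with $|\sigma(\mathbf{a})|\geq n\delta_n/2$ on which
$$
\frac{r_n}{\sqrt{2n}}\le |a_k|\le \frac{\sqrt 2}{\sqrt{\delta_n n}}\qquad\text{and}\qquad \sum_{k\in\sigma(\mathbf{a})}a_k^2\geq \frac{r_n^2}{2}.
$$
By Lemma~\ref{l,a:reduction} it suffices to bound $Q\bigl(\sum_{k\in\sigma(\mathbf{a})} a_k X_k,\varepsilon\bigr)$, so I would set $Y_k:=a_k X_k$ and apply Lemma~\ref{ap:l.concentration}. (Reading its proof, the bound actually produced is $Q(S_n,\lambda)\le A\lambda\bigl(2\sigma^2-8\sum \E X_j^2\mathbb{I}\{|X_j|\geq\lambda/2\}\bigr)^{-1/2}$, with $\lambda$ rather than $\sqrt\lambda$ in the numerator; this is what I would use.)

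Next, using the upper bound on $|a_k|$ on the spread set, for $k\in\sigma(\mathbf{a})$,
$$
\E Y_k^2\mathbb{I}(|Y_k|\geq\varepsilon/2)=a_k^2\,\E X_k^2\mathbb{I}\bigl(|X_k|\geq \varepsilon/(2|a_k|)\bigr)\le a_k^2\max_j\E X_j^2\mathbb{I}\Bigl(|X_j|\geq \tfrac{\varepsilon\sqrt{\delta_n n}}{2\sqrt 2}\Bigr).
$$
Because $Q$ is monotone in $\varepsilon$, it is enough to treat the extreme value $\varepsilon_0:=c'(\delta_n/n)^{1/4}r_n$; at this value the threshold in the tail expectation equals $c'\delta_n^{3/4}n^{1/4}r_n/(2\sqrt 2)$, which exceeds any prescribed $M_0$. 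Choosing $c'$ relative to the $M_0$ supplied by the uniform integrability condition~\eqref{eq: uniform integr condition app}, the maximum above becomes $\le 1/8$, so (assuming $\sigma_k^2=1$ as in the main theorem) the total truncation term is at most $\frac18\sum_{k\in\sigma(\mathbf{a})}a_k^2$. Hence the denominator in Petrov's bound is at least $\sqrt{\sum a_k^2}\geq r_n/\sqrt 2$, and
$$
Q(S_n,\varepsilon_0)\le \frac{A\sqrt 2\,\varepsilon_0}{r_n}\le A\sqrt 2\,c'(\delta_n/n)^{1/4}=A\sqrt 2\,c'\,\delta_n^{1/4}n^{-1/4}\le c\,n^{-1/4}\delta_n^{-3/8},
$$
where the last inequality uses $\delta_n\le 1$ so that $\delta_n^{1/4}\le\delta_n^{-3/8}$. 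Monotonicity of $Q$ then delivers the same bound for every $\varepsilon\le\varepsilon_0$.

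The main obstacle is aligning the UI threshold with the allowed range of $\varepsilon$: UI controls $\E X_k^2\mathbb{I}(|X_k|\geq M)$ only for large $M$, whereas a priori the threshold $\varepsilon\sqrt{\delta_n n}$ could be small. The resolution above is to work at $\varepsilon=\varepsilon_0$ and pass to smaller $\varepsilon$ via monotonicity. A cleaner alternative, if one wishes to be fully general, is to exploit the freedom in Petrov's inequality to choose $\lambda_k=M_0|a_k|$ rather than $\lambda_k=\varepsilon$; this makes the relevant tail cutoff $M_0/2$ uniformly in $k$ as long as $\varepsilon\geq M_0\sqrt{2/(\delta_n n)}$, and again the smaller-$\varepsilon$ range is absorbed by monotonicity. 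Either route yields the stated bound.
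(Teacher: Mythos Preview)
Your argument has the right skeleton (spread set, restriction, Petrov), but there is a genuine gap at the uniform–integrability step. After restricting to the spread set and bounding the truncation term via $|a_k|\le \sqrt{2}/\sqrt{\delta_n n}$, your UI cutoff at the extremal $\varepsilon_0=c'(\delta_n/n)^{1/4}r_n$ becomes
\[
\frac{\varepsilon_0}{2|a_k|}\ \ge\ \frac{c'}{2\sqrt 2}\,\delta_n^{3/4}\,n^{1/4}\,r_n.
\]
You assert this ``exceeds any prescribed $M_0$'', but that is not true in general: the lemma is stated for arbitrary $\delta_n,r_n$, and in the paper's own applications $r_n$ is a negative power of $n$ (for instance $r_n^{(2)}\asymp n^{3/4-2Q}(\ln n)^{-1/4}$, which for $Q=\tfrac12$ makes $\delta_n^{3/4}n^{1/4}r_n\asymp (\ln n)^{-1/4}\to 0$). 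Thus no fixed choice of $c'$ forces the UI tail below $1/8$, and the denominator in Petrov's bound cannot be controlled. Your ``alternative'' of taking $\lambda_k=M_0|a_k|$ does not escape this: the constraint $\lambda_k\le\varepsilon$ again forces $\varepsilon\ge M_0\sqrt{2/(\delta_n n)}$, and plugging this into $A\varepsilon/\sigma$ with $\sigma\ge r_n/\sqrt 2$ gives a bound $\asymp 1/(r_n\sqrt{\delta_n n})$, which in the same regime is far larger than $n^{-1/4}\delta_n^{-3/8}$.

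The paper's proof inserts an extra step that is precisely designed to remove $r_n$ from the UI threshold: after passing to the spread set it performs a dyadic pigeonholing on the $|a_i|$ to find a subset $\mathcal J$ of size $\gtrsim \delta_n n/\ln n$ on which all $|a_i|$ lie in a single dyadic window, then rescales so that $1\le|\widetilde a_i|\le 2$. After this normalisation the UI cutoff is $\varepsilon\eta_n/4$, and one simply takes $\varepsilon\eta_n=M(n\delta_n)^{1/4}$; since $n\delta_n\ge 1$, this threshold is at least $M/4$ regardless of $r_n$, so a fixed $M$ coming from \eqref{eq: uniform integr condition app} suffices. Your proof is missing this decoupling of the UI scale from $r_n$; incorporating a dyadic layer (or some equivalent device that makes the coefficients comparable before invoking Lemma~\ref{ap:l.concentration}) is what is needed to close the gap.
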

\begin{proof}
For an arbitrary set $I \in [n]$ denote $S_I:= \sum_{i \in I} a_{i} X_i$, where $a_{i}$ are the coordinates of $\mathbf a$. We shall write $S_n$ instead of $S_I$ if $I = [n]$.
From Lemma~\ref{l,a:incomp vec} there exist a set $\mathcal I$ such that $|\mathcal I| \geq \frac{1}{2} \delta_n n$ and
$$
\frac{r_n}{\sqrt{2 n}} \le |a_{i}| \le \frac{\sqrt{2}}{\sqrt{\delta_n n}} \text{ for any } i \in \mathcal I.
$$
From Lemma~\ref{l,a:reduction}
$$
Q(S_{n}, \varepsilon ) \le Q(S_{\mathcal I}, \varepsilon).
$$
Now we may split the set $\mathcal{I}$ into a union of the following disjoint sets $I_l$:
\begin{align*}
&I_l = \left \{i \in \mathcal{I}: \frac{2^{l-1} r_n}{\sqrt{2 n}} \le |a_{i}| \le \frac{2^l r_n}{\sqrt{2 n}} \right \}, l = 1, ... , L, \\
&I_{L+1} = \left \{i \in \mathcal{I}: \frac{2^{L} r_n}{\sqrt{2 n}} \le |a_{i}| \le  \frac{\sqrt{2}}{\sqrt{\delta_n n}} \right \},
\end{align*}
where $L = [c(Q) \ln n]$. It
 follows from Dirichlet's principle that there exists $l_0, 1 \le l_0 \le L+1$, such that $|I_{l_o}| \geq c \delta_n n\ln^{-1} n$. Let's set $\mathcal{J}:=I_{l_0}$.
Again by Lemma~\ref{l,a:reduction} we may write  $Q(S_{\mathcal{I}}, \varepsilon) \le Q(S_{\mathcal{J}}, \varepsilon)$.
Denote
$$
\eta_n = \frac{\sqrt{2n}}{2^{l_0 - 1} r_n}.
$$
One may check that $\eta_n \geq c \sqrt{\delta_n n} \geq c$. Let $\widetilde a_{i} = \eta_n a_{i}$ and $\widetilde S_I = \sum_{i \in I} \widetilde a_{i} X_i$. Then on the set $\mathcal J$
$$
1 \le |\widetilde a_{i}| \le 2
$$
and the variance $\sigma^2$ of the sum $\widetilde S_{\mathcal J}$ is bounded below by $|\mathcal J|$. Applying Lemma~\ref{ap:l.concentration} we get
\begin{equation}\label{eq: bound of the conc function}
Q(\widetilde S_{\mathcal J}, \varepsilon \eta_n) \le \frac{\sqrt{\varepsilon \eta_n}}{\sqrt{|\mathcal J|}(2 - 8 \max\limits_{i \in \mathcal J} \E|\widetilde a_{i} X_i|^2 \mathbb I{\{|\widetilde a_{i} X_i| \geq \varepsilon \eta_n /2\}})^{1/2}}
\end{equation}
Note that
\begin{equation*}\label{eq: u.i. secon moment}
\max_{i \in \mathcal J} \E|\widetilde a_{i} X_i|^2 I{\{|\widetilde a_{i} X_i| \geq \varepsilon \eta_n /2\}} \le 2 \max_{i = 1, ... , n} \E|X_i|^2 \mathbb I{\{|X_i| \geq \varepsilon \eta_n /4\}}.
\end{equation*}
We take
$$
\varepsilon = M \frac{(n \delta_n)^{1/4}}{\eta_n},
$$
where $M$ is some constant. It follows from~\eqref{eq: uniform integr condition app}that we may choose $M$ such that
\begin{equation}\label{eq: applying uni integr}
\max_{i = 1, ... , n} \E|X_i|^2 \mathbb I{\{|X_{jk}| \geq \varepsilon \eta_n /4\}} \le 1/16.
\end{equation}
This concludes the statement of the Lemma from~\eqref{eq: bound of the conc function} and~\eqref{eq: applying uni integr}.
\end{proof}


\bibliographystyle{plain}
\bibliography{literatur}
\end{document}